\newtheorem*{thm*}{Theorem}
\newtheorem{thm}{Theorem}[section]
\newtheorem{cor}[thm]{Corollary}
\newtheorem{lem}[thm]{Lemma}
\newtheorem{prop}[thm]{Proposition}
\theoremstyle{definition}
\newtheorem{defn}[thm]{Definition}
\theoremstyle{remark}
\numberwithin{equation}{section}
\DeclareSymbolFont{bbold}{U}{bbold}{m}{n}
\DeclareSymbolFontAlphabet{\mathbbold}{bbold}
\newcommand{\N}{\mathbb{N}}
\newcommand{\Z}{\mathbb{Z}}
\newcommand{\C}{\mathbb{C}}
\newcommand{\acts}{\curvearrowright}
\newcommand{\pmp}{p{$.$}m{$.$}p{$.$}}
\newcommand{\pv}{\bar{p}}
\newcommand{\rv}{\bar{r}}
\newcommand{\cP}{\mathcal{P}}
\newcommand{\supp}{\mathrm{supp}}
\newcommand{\dom}{\mathrm{dom}}
\newcommand{\rng}{\mathrm{rng}}
\newcommand{\sH}{\mathrm{H}}
\newcommand{\res}{\restriction}
\newcommand{\Real}{\mathrm{Re}}
\begin{document}

\title[Bernoulli shifts with bases of equal entropy are isomorphic]{Bernoulli shifts with bases of equal entropy are isomorphic}
\author{Brandon Seward}
\address{Courant Institute of Mathematical Sciences, New York University, 251 Mercer Street, New York, NY 10003, U.S.A.}
\email{b.m.seward@gmail.com}
\keywords{Bernoulli shift, isomorphism, finitary isomorphism, Ornstein, entropy, non-amenable}
\subjclass[2010]{37A35}

\thanks{The author was partially supported by ERC grant 306494 and Simons Foundation grant 328027 (P.I. Tim Austin).}

\begin{abstract}
We prove that if $G$ is a countably infinite group and $(L, \lambda)$ and $(K, \kappa)$ are probability spaces having equal Shannon entropy, then the Bernoulli shifts $G \acts (L^G, \lambda^G)$ and $G \acts (K^G, \kappa^G)$ are isomorphic. This extends Ornstein's famous isomorphism theorem to all countably infinite groups. Our proof builds on a slightly weaker theorem by Lewis Bowen in 2011 that required both $\lambda$ and $\kappa$ have at least $3$ points in their support. We furthermore produce finitary isomorphisms in the case where both $L$ and $K$ are finite.
\end{abstract}
\maketitle

\section{Introduction}

Let $G$ be a countably infinite group and let $(L, \lambda)$ be a standard probability space. The \emph{Bernoulli shift} over $G$ with base $(L, \lambda)$ is the probability space $(L^G, \lambda^G)$ together with the left-shift action of $G$: for $g \in G$ and $x \in L^G$, $g \cdot x$ is defined by the rule $(g \cdot x)(t) = x(g^{-1} t)$ for all $t \in G$. Bernoulli shifts have been an important object of study since the very inception of ergodic theory by Birkhoff and von Neumann in the 1930's. In fact, in those early days von Neumann posed the question as to whether the Bernoulli shifts $\Z \acts (2^\Z, u_2^\Z)$ and $\Z \acts (3^\Z, u_3^\Z)$ are isomorphic (here $n$ denotes $\{0, 1, \ldots, n-1\}$ and $u_n$ denotes the uniform probability measure on $\{0, 1, \ldots, n-1\}$). This question turned out to be surprisingly difficult.

Today, a major open problem is to classify all Bernoulli shifts up to isomorphism for every countably infinite group. Progress on this problem has historically possessed two halves -- on the one hand, using a notion of entropy to obtain the non-isomorphism of Bernoulli shifts of distinct entropies, and, on the other, constructing isomorphisms between Bernoulli shifts of equal entropy. Entropy was first developed by Kolmogorov \cite{Ko58, Ko59} (and corrected and improved by Sinai \cite{Si59}) in 1958 for probability-measure-preserving ({\pmp}) actions of $\Z$. The Kolmogorov--Sinai entropy was extended to {\pmp} actions of countable amenable groups by Kieffer in 1975 \cite{Ki75}. In a major breakthrough in 2008, Bowen \cite{B10b}, together with improvements by Kerr and Li \cite{KL11a}, developed the notion of sofic entropy for {\pmp} actions of sofic groups. Every countable amenable group is sofic, and sofic entropy and Kolmogorov--Sinai entropy coincide for actions of countable amenable groups \cite{B12, KL13}.

For both Kolmogorov--Sinai entropy and sofic entropy, the entropy of Bernoulli shifts $G \acts (L^G, \lambda^G)$ for sofic $G$ has been computed to be equal to the Shannon entropy $\sH(L, \lambda)$ of the base space $(L, \lambda)$ \cite{B10b, KL11b}, where
$$\sH(L, \lambda) = \sum_{\ell \in L} - \lambda(\ell) \log \lambda(\ell)$$
if $\lambda$ has countable support and $\sH(L, \lambda) = \infty$ otherwise. As entropy is an isomorphism invariant, it follows that $G \acts (L^G, \lambda^G)$ and $G \acts (K^G, \kappa^G)$ are non-isomorphic whenever $G$ is sofic (so the sofic entropy is defined) and $\sH(L, \lambda) \neq \sH(K, \kappa)$. In particular, when $G$ is sofic $G \acts (2^G, u_2^G)$ and $G \acts (3^G, u_3^G)$ are non-isomorphic (indeed, it was the work of Kolmogorov that finally settled von Neumann's question).

Its not yet known if all countable groups are sofic. However, if non-sofic countable groups $G$ exist, then it is still unknown whether $(2^G, u_2^G)$ and $(3^G, u_3^G)$ are isomorphic. One route to solving this problem may be to use Rokhlin entropy, which was introduced by the author in 2014 and is defined for {\pmp} actions of general countable groups \cite{S1}. Like sofic entropy, Rokhlin entropy also coincides with Kolmogorov--Sinai entropy for free actions of countable amenable groups \cite{AS}. For free actions of sofic groups, Rokhlin entropy is an upper-bound to sofic entropy and it is open whether the two coincide. The Rokhlin entropy of the Bernoulli shift $G \acts (L^G, \lambda^G)$ is $\sH(L, \lambda)$ when $G$ is sofic but when $G$ is not sofic its value is not yet known. The problem of computing the Rokhlin entropy of Bernoulli shifts over non-sofic groups has strong connections to Gottschalk's surjunctivity conjecture and Kaplansky's direct finiteness conjecture \cite{S2}.

Returning to the second half of the classification problem for Bernoulli shifts, the development of entropy raised the question whether Bernoulli shifts $G \acts (L^G, \lambda^G)$ and $G \acts (K^G, \kappa^G)$ are isomorphic whenever $\sH(L, \lambda) = \sH(K, \kappa)$. Initial positive results for special cases obtained by Me\v{s}alkin were highly motivating \cite{Me59}, but the definitive result appeared in 1970 when Ornstein famously proved this is true for $\Z$ in what has come to be known as ``Ornstein's isomorphism theorem'' \cite{Or70a, Or70b}. In 1975 Stepin defined a group $G$ to be \emph{Ornstein} if $G \acts (L^G, \lambda^G)$ and $G \acts (K^G, \kappa^G)$ are isomorphic whenever $\sH(L, \lambda) = \sH(K, \kappa)$, and Stepin observed that any group containing an Ornstein subgroup is Ornstein \cite{St75}. Then in 1987, Ornstein and Weiss proved that all countably infinite amenable groups are Ornstein \cite{OW87}. Combined with Stepin's observation, it follows that any countable group containing $\Z$ as a subgroup or containing an infinite amenable subgroup must be Ornstein. This covers the vast majority of groups that one tends to encounter.

In 2011, Lewis Bowen made a significant advancement by proving that if $G$ is any countably infinite group and $(L, \lambda)$ and $(K, \kappa)$ are standard probability spaces with $\sH(L, \lambda) = \sH(K, \kappa)$, and one further assumes that the support of $\lambda$ and the support of $\kappa$ each have at least $3$ points, then the Bernoulli shifts $G \acts (L^G, \lambda^G)$ and $G \acts (K^G, \kappa^G)$ are isomorphic \cite{B12b}. In this paper, our main theorem removes Bowen's extra assumption and completes one-half of the classification problem for Bernoulli shifts. To use the terminology of Stepin, we prove that all countably infinite groups are Ornstein.

\begin{thm}
Let $G$ be a countably infinite group and let $(L, \lambda)$ and $(K, \kappa)$ be standard probability spaces. If $\sH(L, \lambda) = \sH(K, \kappa)$ then the Bernoulli shifts $G \acts (L^G, \lambda^G)$ and $G \acts (K^G, \kappa^G)$ are isomorphic.
\end{thm}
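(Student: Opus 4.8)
The plan is to reduce, via Bowen's theorem, to a single new statement comparing a binary base with a ternary base, and then to prove that statement by a finitary construction.

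\textbf{Reductions.} First dispose of degenerate cases. If $\sH(L,\lambda)=0$ then $\lambda$ and $\kappa$ are Dirac masses, so both shifts are the one-point system. If $\sH(L,\lambda)=\infty$ then $\supp\lambda$ and $\supp\kappa$ are both infinite, hence have at least $3$ points, and Bowen's theorem applies directly. So assume $0<h:=\sH(L,\lambda)=\sH(K,\kappa)<\infty$. If both $\supp\lambda$ and $\supp\kappa$ have at least $3$ points, again apply Bowen's theorem. If both have exactly $2$ points, then since the binary entropy function is injective up to the symmetry $p\mapsto 1-p$, the bases are isomorphic as probability spaces and hence so are the shifts. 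This leaves the case where exactly one base, say $(L,\lambda)$, is supported on two points; write $(L,\lambda)=(\{0,1\},\kappa_p)$ with $\kappa_p=(p,1-p)$, $0<p\le 1/2$, so $h=\sH(\kappa_p)\le\log 2$. Pick a probability vector $\mu$ on $\{0,1,2\}$ with $\sH(\mu)=h$ (possible since $h\le\log 2<\log 3$). By Bowen's theorem $G\acts(K^G,\kappa^G)\cong G\acts(\{0,1,2\}^G,\mu^G)$. Thus it suffices to prove:
\[
(\star)\qquad G\acts(\{0,1\}^G,\kappa_p^G)\ \cong\ G\acts(\{0,1,2\}^G,\mu^G),\quad\text{and, for finite bases, finitarily so.}
\]
(The full finitary claim of the theorem then follows: compositions of finitary isomorphisms are finitary, and the $\ge 3$-point case requires only that Bowen's isomorphisms between finite-base shifts be finitary, which is part of the finitary package one must anyway set up.)

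\textbf{The device for $(\star)$.} The difficulty in $(\star)$ is exactly the obstruction behind Bowen's ``at least $3$ points'' hypothesis: over a non-amenable group there is no Rokhlin lemma to tile $G$, and the Ornstein-type codings compensate by reserving one symbol of the alphabet to punctuate an approximate tiling; a binary alphabet has no symbol to spare. I would supply the missing spare symbol by borrowing it from an auxiliary Bernoulli shift of arbitrarily small entropy and then removing it in a finitary limit. Fix probability vectors $\mu_0^{(n)}$ on $\{0,1,2\}$ with $\sH(\mu_0^{(n)})\to 0$. For each $n$, both $G\acts(\{0,1\}^G\times\{0,1,2\}^G,\ \kappa_p^G\times(\mu_0^{(n)})^G)$ and $G\acts(\{0,1,2\}^G\times\{0,1,2\}^G,\ \mu^G\times(\mu_0^{(n)})^G)$ are Bernoulli shifts over bases with at least $6$ points and common entropy $\sH(\kappa_p)+\sH(\mu_0^{(n)})$, so by Bowen's theorem (in its finitary form) there is a $G$-equivariant finitary measure isomorphism
\[
\Theta_n\colon \bigl(\{0,1\}^G\times\{0,1,2\}^G,\ \kappa_p^G\times(\mu_0^{(n)})^G\bigr)\ \xrightarrow{\ \sim\ }\ \bigl(\{0,1,2\}^G\times\{0,1,2\}^G,\ \mu^G\times(\mu_0^{(n)})^G\bigr).
\]

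\textbf{Passing to the limit; the crux.} As $\sH(\mu_0^{(n)})\to 0$ the $\mu_0^{(n)}$-coordinate carries asymptotically no information, and the expectation is that $\Theta_n$ becomes asymptotically independent of it, degenerating in the limit to a $G$-equivariant measure isomorphism $\{0,1\}^G\to\{0,1,2\}^G$ intertwining $\kappa_p^G$ and $\mu^G$, which is $(\star)$. To make this rigorous I would (i) regard each $\Theta_n$, together with $\Theta_n^{-1}$, as a joining-type object and use weak$^*$-compactness of the space of joinings of $(\{0,1\}^G,\kappa_p^G)$ with $(\{0,1,2\}^G,\mu^G)$ to extract a subsequential limit $\rho$; (ii) use the finitary, local nature of the $\Theta_n$ --- $n$-uniform control in probability of the coding radius, together with decay of the sensitivity of a symbol of $\Theta_n$ to the $\mu_0^{(n)}$-coordinate, controlled by $\sH(\mu_0^{(n)})$ --- to show that $\rho$ is carried by the graph of a genuine measure isomorphism rather than a nontrivial joining; (iii) run the symmetric argument for $\Theta_n^{-1}$ to certify invertibility, and observe the limit is again finitary. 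The main obstacle is step (ii): guaranteeing that the ``graph'' property (in particular injectivity, i.e.\ that the limiting code is generating) survives the limit. This is where the genuine work lies, and it is the true content beyond Bowen: one must either extract quantitative $n$-uniform decay estimates from Bowen's finitary codes, so that the $\Theta_n$ are honestly ``almost continuous, almost one-to-one'' maps on $\{0,1\}^G$ with error governed by $\sH(\mu_0^{(n)})$, or else reorganize the argument so that the auxiliary ternary factor is introduced and then eliminated within one self-contained marker coding on $\{0,1\}^G$ itself --- replacing the reserved symbol by a sparse, low-probability random marker set and carefully re-randomizing between markers to produce an exactly i.i.d.\ $\mu$-valued generating partition. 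Granting this, $(\star)$ holds, and with it the theorem.
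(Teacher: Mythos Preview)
Your proposal has a genuine gap at exactly the point you yourself flag as ``the crux.'' The limiting argument in step (ii) is not a proof: a weak$^*$ limit of graph joinings need not be a graph joining, and nothing in Bowen's construction supplies the uniform quantitative control you would need. Bowen's isomorphisms are built by invoking Thouvenot's relative isomorphism theorem over a common Bernoulli factor; they come with no effective bound on coding radii, and certainly no $n$-uniform bound as the auxiliary entropy $\sH(\mu_0^{(n)})$ tends to $0$. Without such bounds there is no reason the limit joining $\rho$ should be supported on a graph, let alone the graph of a bijection. Your fallback suggestion---``reorganize the argument so that the auxiliary ternary factor is introduced and then eliminated within one self-contained marker coding on $\{0,1\}^G$''---is a description of what one would like to do, not a construction; the whole difficulty of the problem is precisely that on a two-letter alphabet there is no symbol to serve as a marker, and ``a sparse, low-probability random marker set'' is not automatically a $G$-equivariant measurable function of the $\{0,1\}^G$ input alone. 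So as written the argument does not close.

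The paper avoids limits entirely. Instead of trying to squeeze a marker out of a binary alphabet, it finds a finite subgroup $\Gamma\le G$ with $|\Gamma|\ge 5$ (or $\Z\le G$, in which case Stepin's trick finishes immediately) and works with $\Gamma$-blocks. The key technical step is an explicit elementary computation (Lemma~\ref{lem:3less}): given the binary vector $(p_0,p_1)$ and $k=|\Gamma|-1\ge 4$, one constructs a vector $\bar r=(r_0,r_1,\ldots)$ with at least four positive entries, the same Shannon entropy, and $r_0^k r_1 = p_0^k p_1$. That last equality is the whole point: it forces $\lambda^\Gamma$ and $\mu^\Gamma$ to assign the \emph{same} mass to each $\Gamma$-word in which the symbol $\ell_1$ appears exactly once. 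Those words form a $\Gamma$-invariant set $P$ of free points on which the two product measures literally agree, so $(M^G,\mu)$ with $M=\{\ell_0,\ell_1,*\}$ becomes a common factor of both shifts, obtained by collapsing every $\Gamma$-coset whose pattern is not in $P$ to a constant $*$. Over this factor the conditional structure is genuinely i.i.d.\ along a $\Z$-action in the pseudogroup, and one applies Ornstein's theorem once, directly, with no approximation or limit. The contrast with your outline is that the paper manufactures the missing ``spare symbol'' at the level of $\Gamma$-blocks by an exact entropy-and-moment matching, rather than trying to borrow it from an auxiliary system and then remove it asymptotically.
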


When combined with computations of sofic entropy \cite{B10b, KL11b}, this completes the classification of Bernoulli shifts up to isomorphism over sofic groups.

\begin{cor}
Let $G$ be a countably infinite sofic group, and let $(L, \lambda)$ and $(K, \kappa)$ be standard probability spaces. Then the Bernoulli shifts $G \acts (L^G, \lambda^G)$ and $G \acts (K^G, \kappa^G)$ are isomorphic if and only if $\sH(L, \lambda) = \sH(K, \kappa)$.
\end{cor}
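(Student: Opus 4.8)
The plan is to handle the two directions of the biconditional separately; one of them is exactly the Theorem, and the other is the ``easy half'' of the classification problem described in the introduction, namely the entropy obstruction. For the backward implication, suppose $\sH(L, \lambda) = \sH(K, \kappa)$. A countably infinite sofic group is in particular a countably infinite group, so the Theorem applies verbatim and produces an isomorphism between $G \acts (L^G, \lambda^G)$ and $G \acts (K^G, \kappa^G)$. Nothing further is required here.

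For the forward implication, suppose the two Bernoulli shifts are isomorphic; I want to deduce $\sH(L, \lambda) = \sH(K, \kappa)$. Since $G$ is sofic, I would fix a sofic approximation $\Sigma$ to $G$ and let $h_\Sigma(\cdot)$ denote sofic entropy relative to $\Sigma$ in the sense of Bowen and Kerr--Li \cite{B10b, KL11a}. For each fixed $\Sigma$, sofic entropy is an isomorphism invariant of {\pmp} actions, so the assumed isomorphism forces $h_\Sigma(G \acts (L^G, \lambda^G)) = h_\Sigma(G \acts (K^G, \kappa^G))$. On the other hand, the sofic entropy of a Bernoulli shift over a sofic group has been computed \cite{B10b, KL11b} to equal the Shannon entropy of its base, independently of the choice of $\Sigma$; that is, $h_\Sigma(G \acts (L^G, \lambda^G)) = \sH(L, \lambda)$ and $h_\Sigma(G \acts (K^G, \kappa^G)) = \sH(K, \kappa)$. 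Combining these equalities yields $\sH(L, \lambda) = \sH(K, \kappa)$, completing the argument.

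The only point that warrants a moment's attention is the case where one of the Shannon entropies is infinite (for instance if $\lambda$ has infinite support, or countable support with divergent defining series). Here one should check that the cited computation of sofic entropy remains valid in the infinite-entropy regime, so that a base of infinite Shannon entropy is genuinely distinguished from one of finite entropy; alternatively one can argue by exhausting the base by finite sub-bases of increasing entropy and using that the sofic entropy of $G \acts (L^G, \lambda^G)$ dominates that of each such sub-Bernoulli shift. With that caveat dispatched, there is no real obstacle: the corollary is a formal consequence of the Theorem together with the established computation of the sofic entropy of Bernoulli shifts, and soficity is used only to guarantee that an isomorphism-invariant entropy realizing $\sH$ of the base is available.
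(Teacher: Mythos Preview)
Your proposal is correct and matches the paper's own reasoning: the corollary is stated immediately after the main theorem with the remark that it follows by combining the theorem with the computations of sofic entropy in \cite{B10b, KL11b}, which is precisely the two-direction argument you give. The paper does not spell out a separate proof, so your write-up is in fact more detailed than what appears there.
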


Recall that a measure-preserving map $\phi : (X, \mu) \rightarrow (Y, \nu)$ between two topological measure spaces $(X, \mu)$ and $(Y, \nu)$ is \emph{finitary} if there is a conull set $X_0 \subseteq X$ such that the restriction of $\phi$ to $X_0$ is continuous with respect to the subspace topology. We say that $(X, \mu)$ and $(Y, \nu)$ are \emph{finitarily isomorphic} if there is a measure-space isomorphism $\phi : (X, \mu) \rightarrow (Y, \nu)$ such that both $\phi$ and $\phi^{-1}$ are finitary. In case we are considering {\pmp} actions $G \acts (X, \mu)$ and $G \acts (Y, \nu)$, we further require that the map $\phi$ be $G$-equivariant.

Finitary isomorphisms between Bernoulli shifts over $\Z$ having finite base spaces were constructed by Keane and Smorodinsky in 1979 \cite{KS79}. By modifying the proof of our main theorem, we generalize Keane and Smorodinsky's theorem to all countably infinite groups.

\begin{thm}
Let $G$ be a countably infinite group and let $(L, \lambda)$ and $(K, \kappa)$ be standard probability spaces with $L$ and $K$ finite. If $\sH(L, \lambda) = \sH(K, \kappa)$ then the Bernoulli shifts $G \acts (L^G, \lambda^G)$ and $G \acts (K^G, \kappa^G)$ are finitarily isomorphic.
\end{thm}

\subsection*{Outline}
In Section \ref{sec:reduce} we show that it is sufficient to establish isomorphisms for pairs of Bernoulli shifts coming from certain specialized pairs of equal-entropy base spaces. In Section \ref{sec:iso} we prove the main theorem, and in Section \ref{sec:finitary} we construct isomorphisms that are finitary. Finally, in Section \ref{sec:discuss} we discuss the historical origins of the methods of our proof and in particular we articulate how our proof is different from and similar to the prior proof of Bowen that excluded two-atom base spaces.

\section{Reduction to a special case} \label{sec:reduce}

In this section we show that it is sufficient to produce isomorphisms between $(L^G, \lambda^G)$ and $(K^G, \kappa^G)$ for specific pairs of standard probability spaces $(L, \lambda)$ and $(K, \kappa)$ with $\sH(L, \lambda) = \sH(K, \kappa)$. Specifically, we will consider pairs that are related via the relation defined below.

\begin{defn} \label{defn:related}
Fix a non-trivial finite group $\Gamma$. We define a symmetric relation $R_\Gamma$ on the set of standard probability spaces as follows. For two standard probability spaces $(L, \lambda)$ and $(K, \kappa)$ we declare $(L, \lambda) \ R_\Gamma \ (K, \kappa)$ if
\begin{enumerate}
\item [\rm (i)] $\sH(L, \lambda) = \sH(K, \kappa)$;
\item [\rm (i)] the sets $L$ and $K$ are not disjoint;
\item [\rm (ii)] there is a Borel $\Gamma$-invariant set $P \subseteq (L \cap K)^\Gamma$ satisfying the following:
\begin{enumerate}
\item[\rm (a)] $\lambda^\Gamma \res P = \kappa^\Gamma \res P$ and $\lambda^\Gamma(P) > 0$;
\item[\rm (b)] every element of $P$ has trivial $\Gamma$-stabilizer.
\end{enumerate}
\end{enumerate}
\end{defn}

We will show in the next section that if $\Gamma \leq G$, then any pair of $R_\Gamma$-related probability spaces will produce isomorphic Bernoulli shifts over $G$. As isomorphism is transitive, we will consequently obtain isomorphisms between any Bernoulli shifts whose base spaces are equivalent in $[R_\Gamma]^{\text{trans}}$, the transitive closure of $R_\Gamma$. The purpose of this section is to prove the following proposition.

\begin{prop} \label{prop:trans}
Fix a finite group $\Gamma$ with $|\Gamma| \geq 5$. Then for any two standard probability space $(L, \lambda)$ and $(K, \kappa)$ we have $\sH(L, \lambda) = \sH(K, \kappa)$ if and only if $(L, \lambda) \ [R_\Gamma]^{\text{trans}} \ (K, \kappa)$.
\end{prop}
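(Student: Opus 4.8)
The plan is to dispatch the easy direction at once and, for the hard direction, to fix for each entropy value $h$ a ``canonical'' standard probability space $C_h$ and show that every standard probability space of entropy $h$ is joined to $C_h$ by a finite chain of $R_\Gamma$-steps; since $R_\Gamma$ is symmetric and, by condition~(i) of Definition~\ref{defn:related}, preserves $\sH$, transitivity then gives the statement (the ``only if'' direction being immediate). The requirement that $L\cap K\neq\emptyset$ is harmless bookkeeping: we may realize all the spaces in sight on a common standard Borel space, or just arrange the underlying sets along our chains to overlap, and I will do this tacitly. First I would record the sufficient conditions for the $R_\Gamma$-relation that the argument uses, each obtained by choosing $P$ by hand. \textbf{(a)} If $\sH(L,\lambda)=\sH(K,\kappa)$, $L\cap K\neq\emptyset$, and $\lambda$ and $\kappa$ agree on a Borel set $B\subseteq L\cap K$ whose restricted measure is not a point mass, then $(L,\lambda)\,R_\Gamma\,(K,\kappa)$: let $P$ be the set of aperiodic points of $B^\Gamma$, which is $\Gamma$-invariant, has positive measure (the points with non-trivial stabilizer are those constant on the cosets of some non-trivial subgroup, a set of strictly smaller measure than $\lambda^\Gamma(B^\Gamma)$ because $\lambda\res B$ is not a point mass), and satisfies $\lambda^\Gamma\res P=\kappa^\Gamma\res P$ since $\lambda$ and $\kappa$ agree on $B$. \textbf{(b)} If, together with equality of entropy, $\lambda$ and $\kappa$ have atoms $a\neq b$ in $L\cap K$ with $\lambda(a)^{|\Gamma|-1}\lambda(b)=\kappa(a)^{|\Gamma|-1}\kappa(b)$, then $(L,\lambda)\,R_\Gamma\,(K,\kappa)$: let $P$ be the $\Gamma$-orbit of the point equal to $a$ at every coordinate except one, where it equals $b$; this point has trivial stabilizer (as $\Gamma$ acts freely on itself), its orbit has $|\Gamma|$ points, and each receives mass $\lambda(a)^{|\Gamma|-1}\lambda(b)=\kappa(a)^{|\Gamma|-1}\kappa(b)$ under both Bernoulli measures. (One also has the analogous identities with pattern sizes $k\in\{1,\dots,|\Gamma|-1\}$ admitting a subset of $\Gamma$ with trivial setwise stabilizer, and unions of such orbits; and if the natural literal equality of $\lambda^\Gamma\res P$ and $\kappa^\Gamma\res P$ turns out to be too rigid in a given case, one exploits the remaining freedom in $P$ more fully.)

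The case $\sH=\infty$ is then quick, needing only two steps. Here each of $(L,\lambda)$, $(K,\kappa)$ is spread out: having infinite entropy it is not a point mass and moreover carries, for every $\varepsilon>0$, a Borel set of measure less than $\varepsilon$ on which the restricted measure is not a point mass. On a common realization choose disjoint such sets $B_1$ for $\lambda$ and $B_2$ for $\kappa$ with $\lambda(B_1)+\kappa(B_2)<1$, and set $\rho=\lambda\res B_1+\kappa\res B_2+\mu_0$, where $\mu_0$ is any atomless measure of the residual mass supported off $B_1\cup B_2$. Then $\sH(\rho)=\infty$, so by~(a) we have $(L,\lambda)\,R_\Gamma\,\rho$ (they agree on $B_1$) and $\rho\,R_\Gamma\,(K,\kappa)$ (they agree on $B_2$). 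The case $\sH=0$ is trivial, both spaces being single points.

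The substance is $\sH=h\in(0,\infty)$, where all spaces are purely atomic. I would take $C_h$ to be the geometric distribution with weights $(1-r)r^{k-1}$, $k\ge 1$, where $r=r(h)\in(0,1)$ is the unique parameter with $\sH(C_h)=h$, and proceed in two stages. \emph{Stage one}: if $(L,\lambda)$ has infinitely many atoms (in particular arbitrarily small ones), use move~(b) on a pair of tiny atoms. Picking tiny atoms $a,b$ of $\lambda$ and tiny atoms of $C_h$, one builds an intermediate space carrying $a,b$ at their $\lambda$-masses, the chosen tiny atoms of $C_h$ at their $C_h$-masses, and a ``filler'' supported on fresh points whose shape is adjusted so the total entropy is exactly $h$; this is feasible because tiny atoms carry negligible entropy, so the filler's entropy budget is positive and is realized by splitting it into enough pieces. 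Move~(b) then links $(L,\lambda)$ to this intermediate space and the intermediate space to $C_h$. \emph{Stage two}, which I expect to be the main obstacle, is to bring an arbitrary finite-entropy space into the scope of stage one — above all to handle two-atom bases $(p,1-p)$, precisely the case excluded by Bowen and the raison d'\^etre of this paper. Two-atom bases are rigid: the moves immediately available from $(p,1-p)$ replace the pair $(p,1-p)$ by a pair $(x,y)$ on one of the curves $x^{k}y^{|\Gamma|-k}=p^{k}(1-p)^{|\Gamma|-k}$, reshape the residual mass $1-x-y$, and keep total entropy $h$, which requires $\sH(x,y,1-x-y)\le h$ for some admissible pattern and some point of the corresponding curve with $x+y<1$. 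The crux is to show that for every $p$ at least one such move exists, producing a space with at least three atoms and entropy $h$, after which stage one applies; establishing this is an explicit but delicate optimization, and this is presumably where the hypothesis $|\Gamma|\ge 5$ enters — a larger group supplies more admissible pattern sizes $k$ and more room along the level curves. Chaining stage two, then stage one, then stage one in reverse down to $C_h$, and assembling the three entropy regimes, completes the proof.
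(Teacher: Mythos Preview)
Your outline follows the paper's route and correctly isolates the crux, but two points deserve correction.

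First, your stage one/stage two boundary is misplaced and leaves a gap. You require infinitely many atoms for stage one, yet your stage two (for a two-atom base) only promises ``a space with at least three atoms and entropy $h$, after which stage one applies'' --- but stage one as you stated it does \emph{not} apply to a three-atom space. The paper sets the threshold at $|\supp|\ge 4$ instead: with four or more atoms one partitions $L$ as $X\sqcup Y\sqcup M_1\sqcup M_2$, keeps $X\cup Y$ untouched, and reshapes the mass on $M=M_1\cup M_2$ to place atoms of any prescribed small masses $\alpha,\beta$ there while restoring the entropy (this is your move~(a), taking $B=X\cup Y$; note $\sH(M,\lambda_M)>0$ is needed so the reshaping has room, which is why four atoms rather than three). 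Two such spaces with matching $\alpha,\beta$ are then $R_\Gamma$-related directly. No canonical $C_h$ is needed, and the infinite-entropy case requires no separate treatment. Stage two must then only lift $|\supp|\in\{2,3\}$ to $|\supp|\ge 4$, which it does in one $R_\Gamma$-step.

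Second, your guess about where $|\Gamma|\ge 5$ enters is off. The paper uses exactly one pattern, $k=|\Gamma|-1$ (your move~(b): one coordinate $b$, all others $a$), never the range of $k$ you describe; the hypothesis $|\Gamma|\ge 5$ is there solely to make $k\ge 4$. The ``delicate optimization'' you anticipate is then the following: given $\bar p=(p_0,p_1,\dots)$ with $p_0\le 1/2$ and $p_0p_1>0$, set $r_0=1-2p_0^k$ and $r_1=(p_0/r_0)^k p_1$, so $r_0^k r_1=p_0^k p_1$; check $r_0+r_1<1$ (this reduces to $(1-2^{-k+1})^k>1/2$, which holds for $k\ge 4$); and then show $\sH(r_0,r_1,1-r_0-r_1)<\sH(p_0,1-p_0)$ by bounding both sides with the Taylor expansion of $-z\log z-(1-z)\log(1-z)$ about $z=1/2$ and comparing term by term. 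The residual mass $1-r_0-r_1$ is then split into at least two further pieces to reach $|\supp|\ge 4$ at the correct entropy. So the constraint is analytic, about the exponent in the single level curve $r_0^k r_1=p_0^k p_1$, not combinatorial about the availability of many curves.
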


We prove this in a few steps.

\begin{lem} \label{lem:pairequal}
Let $(L, \lambda)$ and $(K, \kappa)$ be probability spaces with $\sH(L, \lambda) = \sH(K, \kappa)$. Suppose there are disjoint sets $A, B \subseteq L \cap K$ such that $\lambda \res A \cup B = \kappa \res A \cup B$ and $\lambda(A), \lambda(B) > 0$. Then $(L, \lambda) \ R_\Gamma \ (K, \kappa)$.
\end{lem}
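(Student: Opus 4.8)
The plan is to directly construct the $\Gamma$-invariant set $P \subseteq (L\cap K)^\Gamma$ demanded by Definition \ref{defn:related}. Conditions (i) and (ii) of $R_\Gamma$ are already among our hypotheses (equal Shannon entropy, and $A \cup B \subseteq L \cap K$ nonempty so $L, K$ not disjoint), so everything reduces to producing $P$. The natural candidate is to build $P$ out of configurations valued in the two-point set $\{a, b\}$ for a fixed choice $a \in A$, $b \in B$ — but because $\lambda$ and $\kappa$ need not agree on the singletons $\{a\}$ and $\{b\}$, I instead work with the partition of $(A \cup B)^\Gamma$ by "$A$-versus-$B$ pattern": for $x \in (A\cup B)^\Gamma$ let $\sigma(x) \in \{0,1\}^\Gamma$ record $\sigma(x)(\gamma) = 0$ iff $x(\gamma) \in A$. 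Since $\lambda\restriction A\cup B = \kappa\restriction A\cup B$, the pushforwards of $\lambda^\Gamma\restriction(A\cup B)^\Gamma$ and $\kappa^\Gamma\restriction(A\cup B)^\Gamma$ under $\sigma$ agree; moreover, \emph{conditioned} on any fixed pattern $p \in \{0,1\}^\Gamma$, the two conditional measures on $\sigma^{-1}(p)$ are both equal to the product over $\gamma$ of $(\lambda\restriction A)$ or $(\lambda\restriction B)$ (suitably normalized), hence equal to each other.

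Next I choose a single $\Gamma$-invariant Borel set $Q \subseteq \{0,1\}^\Gamma$ of patterns such that (a) $Q$ has positive mass under the Bernoulli$(\lambda(A)/\lambda(A\cup B), \lambda(B)/\lambda(A\cup B))$ measure $\beta$ on $\{0,1\}^\Gamma$, and (b) every $p \in Q$ has trivial $\Gamma$-stabilizer. For (b) I use that the free part of the Bernoulli action $\Gamma \acts (\{0,1\}^\Gamma, \beta)$ is conull — here is where a finite group $\Gamma$ causes no trouble, since a configuration $p$ is fixed by a nontrivial $\gamma$ only if $p$ is constant on each $\langle\gamma\rangle$-coset, an event of probability $\big(\beta_0^{|\Gamma|/|\langle\gamma\rangle|} + \beta_1^{|\Gamma|/|\langle\gamma\rangle|}\big)^{|\langle\gamma\rangle|} < 1$ summed over the finitely many nontrivial $\gamma$, so the complement of the free part has measure $< 1$ and in fact we may take $Q$ to be exactly the free part, which is $\Gamma$-invariant, Borel, and of positive (indeed full, once $\beta_0, \beta_1 \in (0,1)$, which holds since $\lambda(A), \lambda(B) > 0$) measure. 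Then set $P := \sigma^{-1}(Q) \cap (A\cup B)^\Gamma$. This $P$ is Borel and $\Gamma$-invariant because $\sigma$ is $\Gamma$-equivariant and $Q$ is $\Gamma$-invariant.

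It remains to verify (ii)(a) and (ii)(b) for this $P$. For (ii)(b): if $x \in P$ and $\gamma x = x$ for some nontrivial $\gamma$, then $\gamma\,\sigma(x) = \sigma(\gamma x) = \sigma(x)$, contradicting $\sigma(x) \in Q \subseteq \mathrm{Free}(\{0,1\}^\Gamma)$; so every element of $P$ has trivial stabilizer. For (ii)(a): $\lambda^\Gamma(P) = \lambda^\Gamma((A\cup B)^\Gamma)\cdot\beta(Q) > 0$ since $\lambda(A\cup B) > 0$ and $\beta(Q) > 0$; and $\lambda^\Gamma\restriction P = \kappa^\Gamma\restriction P$ follows by disintegrating over $\sigma$: both restricted measures assign to a Borel $E \subseteq P$ the value $\int_{Q} \mu_p(E)\,d(\lambda^\Gamma(A\cup B)^\Gamma\cdot\beta)(p)$, where $\mu_p$ is the common conditional measure on $\sigma^{-1}(p)$ described above, which is the same whether computed from $\lambda$ or from $\kappa$ precisely because $\lambda\restriction A\cup B = \kappa\restriction A\cup B$. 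Hence $(L,\lambda)\ R_\Gamma\ (K,\kappa)$.

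The only mildly delicate point is the disintegration/conditional-measure bookkeeping in the last paragraph — making sure that "conditioned on the $A/B$-pattern, the two product measures coincide" is stated and applied correctly — but this is routine once one observes that on $(A\cup B)^\Gamma$ each of $\lambda^\Gamma$ and $\kappa^\Gamma$ is literally the same measure (they agree on the generating rectangles), so in fact $\lambda^\Gamma\restriction(A\cup B)^\Gamma = \kappa^\Gamma\restriction(A\cup B)^\Gamma$ outright and one may simply take $P = \sigma^{-1}(Q)\cap(A\cup B)^\Gamma$ with no disintegration needed at all. I would present it this streamlined way: the hypothesis $\lambda\restriction A\cup B = \kappa\restriction A\cup B$ immediately gives $\lambda^\Gamma\restriction(A\cup B)^\Gamma = \kappa^\Gamma\restriction(A\cup B)^\Gamma$, so any $\Gamma$-invariant Borel $P \subseteq (A\cup B)^\Gamma$ of positive $\lambda^\Gamma$-measure lying in the free part works, and the free part intersected with $(A\cup B)^\Gamma$ is such a set.
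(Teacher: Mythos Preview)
Your approach is correct and is essentially the paper's approach: both rest on the observation that $\lambda\res A\cup B = \kappa\res A\cup B$ forces $\lambda^\Gamma$ and $\kappa^\Gamma$ to agree on all of $(A\cup B)^\Gamma$, so any $\Gamma$-invariant positive-measure $P\subseteq (A\cup B)^\Gamma$ consisting of points with trivial stabilizer works. The paper just names one such $P$ explicitly---the $\Gamma$-saturation of $\{x : x(1_\Gamma)\in A,\ x(\gamma)\in B\text{ for }\gamma\neq 1_\Gamma\}$---which is exactly your $\sigma^{-1}$ of a single free $\{0,1\}$-pattern, bypassing the general free-part discussion.

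Two harmless slips to clean up: the free part of $\{0,1\}^\Gamma$ does \emph{not} have full $\beta$-measure when $\Gamma$ is finite (the two constant configurations have positive mass and are fixed by all of $\Gamma$), and your fixed-point probability has the exponents transposed---it should read $(\beta_0^{|\langle\gamma\rangle|}+\beta_1^{|\langle\gamma\rangle|})^{|\Gamma|/|\langle\gamma\rangle|}$. Neither matters, since your own streamlined version at the end only needs that the free part is nonempty, hence of positive measure in the finite space $\{0,1\}^\Gamma$.
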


\begin{proof}
Set $P = \Gamma \cdot \{x \in (L \cap K)^\Gamma : x(1_G) \in A \text{ and } \forall 1_\Gamma \neq \gamma \in \Gamma \ x(\gamma) \in B\}$. Since $A$ and $B$ are disjoint, every point in $P$ has trivial $\Gamma$-stabilizer. Also, $\lambda^\Gamma \res P = \kappa^\Gamma \res P$ since $\lambda$ and $\kappa$ agree on $A \cup B$. Thus all the conditions of Definition \ref{defn:related} are satisfied.
\end{proof}

Below we write $\supp(\lambda)$ for the support of $\lambda$. Also, for a countable Borel partition $\cP$ of $(L, \lambda)$ we write $\sH_\lambda(\cP)$ for the Shannon entropy of $\cP$ with respect to $\lambda$, i.e.
$$\sH_\lambda(\cP) = \sum_{P \in \cP} - \lambda(P) \log(\lambda(P)).$$
Also, if $\pv = (p_i)$ is a probability vector (i.e. a finite or countable tuple of non-negative real numbers that sum to $1$) then we write $\sH(\pv) = \sum_i - p_i \log(p_i)$.

\begin{lem} \label{lem:4more}
Let $(L, \lambda)$ be a probability space with $|\supp(\lambda)| \geq 4$. Then there is $\epsilon > 0$ so that for every $0 < \alpha, \beta < \epsilon$ there is a probability space $(K, \kappa)$ that is $R_\Gamma$-related to $(L, \lambda)$ and there are $a, b \in K$ with $\kappa(a) = \alpha$ and $\kappa(b) = \beta$.
\end{lem}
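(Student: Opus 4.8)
The plan is to reduce to Lemma~\ref{lem:pairequal}: it suffices, for all sufficiently small $\alpha,\beta>0$, to produce a probability space $(K,\kappa)$ with $\sH(K,\kappa)=\sH(L,\lambda)$, disjoint Borel sets $A,B\subseteq L\cap K$ with $\lambda(A),\lambda(B)>0$ and $\lambda\res A\cup B=\kappa\res A\cup B$, and two points $a,b\in K$ with $\kappa(a)=\alpha$ and $\kappa(b)=\beta$. I would do this by leaving a positive-measure ``reservoir'' of $(L,\lambda)$ untouched (this becomes $A\cup B$, on which the two measures agree) and rebuilding the complementary mass on a disjoint set of freshly introduced atoms, among which I single out one of mass $\alpha$ and one of mass $\beta$; the remaining fresh atoms exist only to absorb the leftover mass and entropy.

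Two elementary facts about Shannon entropy will be used. First, if $(q_i)$ is a finite or countable tuple of non-negative reals with sum $Q\in(0,1]$ and at least two positive entries, then $\sum_i-q_i\log q_i>-Q\log Q$, since merging all the $q_i$ into a single cell strictly decreases the entropy. Second, for fixed $m>0$, as $(\gamma_i)$ ranges over finitely supported tuples of non-negative reals summing to $m$, the quantity $\sum_i-\gamma_i\log\gamma_i$ attains every value in $[-m\log m,\infty)$: on each simplex $\{\gamma\geq 0:\gamma_1+\cdots+\gamma_n=m\}$ this function is continuous, equals $-m\log m$ at a vertex and $-m\log m+m\log n$ at the barycenter, so the intermediate value theorem together with $n\to\infty$ gives the claim.

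Next I would split on whether $\sH(L,\lambda)$ is finite. If $\sH(L,\lambda)=\infty$ then $\supp(\lambda)$ is infinite (a finitely supported measure has finite entropy), so $L$ can be partitioned into Borel sets $A,B,D$ of positive $\lambda$-measure; put $r=\lambda(D)$ and $\epsilon=r/2$. For $0<\alpha,\beta<\epsilon$, let $K=A\sqcup B\sqcup\{a,b\}\sqcup\{c_1,c_2,\ldots\}$ with $a,b,c_i$ fresh points, let $\kappa$ agree with $\lambda$ on $A\cup B$, assign masses $\alpha,\beta$ to $a,b$, and assign to the $c_i$ positive masses summing to $r-\alpha-\beta$ chosen so that $\sum_i-\gamma_i\log\gamma_i=\infty$ (for instance with $\gamma_i$ proportional to $1/(i(\log(i+1))^2)$). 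Then $\sH(K,\kappa)=\infty=\sH(L,\lambda)$, and Lemma~\ref{lem:pairequal} applies with these $A$ and $B$.

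The substantive case is $\sH(L,\lambda)<\infty$, where $\lambda$ is purely atomic; here the hypothesis $|\supp(\lambda)|\geq 4$ is used, to say $\lambda$ has at least four atoms. Take $A=\{\ell_1\}$ and $B=\{\ell_2\}$ for two of them, of masses $p_1,p_2$, and let $D$ be the union of all the remaining atoms, so $D$ contains at least two atoms and $r:=\lambda(D)=1-p_1-p_2>0$. With $s:=\sum_{\ell\in D}-\lambda(\ell)\log\lambda(\ell)$, the first fact gives the strict inequality $s>-r\log r$. Since $\alpha\log\alpha,\beta\log\beta\to 0$ and $-(r-\alpha-\beta)\log(r-\alpha-\beta)\to-r\log r$ as $\alpha,\beta\to 0$, this gap persists near the origin: I can fix $\epsilon>0$ with $\epsilon\leq r/2$ so that for all $0<\alpha,\beta<\epsilon$, writing $m:=r-\alpha-\beta$ and $t:=s+\alpha\log\alpha+\beta\log\beta$, one has $-m\log m<t<\infty$. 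The second fact then produces finitely many non-negative reals $\gamma_i$ with $\sum_i\gamma_i=m$ and $\sum_i-\gamma_i\log\gamma_i=t$; discarding the zero ones, set $K=\{\ell_1,\ell_2\}\cup\{a,b\}\cup\{c_i\}$ with fresh $a,b,c_i$ and $\kappa=p_1\delta_{\ell_1}+p_2\delta_{\ell_2}+\alpha\delta_a+\beta\delta_b+\sum_i\gamma_i\delta_{c_i}$. Then $\kappa$ restricts to $\lambda$ on $\{\ell_1,\ell_2\}$, and $\sH(K,\kappa)=-p_1\log p_1-p_2\log p_2-\alpha\log\alpha-\beta\log\beta+t=-p_1\log p_1-p_2\log p_2+s=\sH(L,\lambda)$, so Lemma~\ref{lem:pairequal} yields $(L,\lambda)\ R_\Gamma\ (K,\kappa)$ with $\kappa(a)=\alpha$ and $\kappa(b)=\beta$. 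The only genuine obstacle here is this entropy accounting: matching $\sH(K,\kappa)$ to $\sH(L,\lambda)$ exactly works precisely because $s>-r\log r$ strictly, which supplies the slack to insert atoms as small as $\alpha$ and $\beta$, and that strict inequality is in turn why $D$ must contain two atoms, that is, why the hypothesis is $|\supp(\lambda)|\geq 4$ and not merely $\geq 3$. Everything else is routine bookkeeping.
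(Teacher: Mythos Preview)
Your argument is correct and follows essentially the same strategy as the paper: reserve two disjoint positive-measure pieces of $L$ on which $\kappa$ will agree with $\lambda$ (so that Lemma~\ref{lem:pairequal} applies), and rebuild the complementary mass on fresh atoms, two of which carry the prescribed masses $\alpha,\beta$ while the rest are adjusted to match the entropy exactly. The only cosmetic difference is that the paper handles the finite- and infinite-entropy cases uniformly---it partitions $L$ into four positive-measure pieces $X,Y,M_1,M_2$ (possible in any standard probability space with $|\supp(\lambda)|\ge4$), keeps $X\cup Y$ intact, and does the entropy bookkeeping with the normalized restriction $\lambda_M$ on $M=M_1\cup M_2$---whereas you split into cases and, in the purely atomic case, work with unnormalized contributions; the content is the same.
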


\begin{proof}
Since $|\supp(\lambda)| \geq 4$ there exists a Borel partition $\{X, Y, M_1, M_2\}$ of $L$ into four sets of positive measure. Set $M = M_1 \cup M_2$. Let $\lambda_X$, $\lambda_Y$, and $\lambda_M$ denote the normalized restrictions of $\lambda$ to $X$, $Y$, and $M$, respectively.  Note that by construction $\sH(M, \lambda_M) > 0$. Fix $0 < \epsilon \leq \lambda(M)/2$ sufficiently small so that
\begin{equation} \label{eq:4more}
\sH \left( \frac{\alpha}{\lambda(M)}, \frac{\beta}{\lambda(M)}, 1 - \frac{\alpha + \beta}{\lambda(M)} \right) \leq \sH(M, \lambda_M)
\end{equation}
whenever $0 \leq \alpha, \beta < \epsilon$.

Now fix $0 < \alpha, \beta < \epsilon$. Then $\alpha + \beta < \lambda(M)$. Inequality (\ref{eq:4more}) implies that we can construct a probability vector $\pv = (p_0, p_1, \ldots)$ with $p_0 = \alpha / \lambda(M)$, $p_1 = \beta / \lambda(M)$, and satisfying $\sH(\pv) = \sH(M, \lambda_M)$. Set $K = X \cup Y \cup \N$ and define the probability measure $\kappa$ by
$$\kappa = \lambda \res (X \cup Y) + \lambda(M) \cdot \sum_{i \in \N} p_i \cdot \delta_i,$$
where $\delta_i$ is the single-point mass on $i$. Setting $a = 0$ and $b = 1$, we clearly have $\kappa(a) = \alpha$ and $\kappa(b) = \beta$. Also, by standard properties of Shannon entropy we have
\begin{align*}
\sH(K, \kappa) & = \sH_\kappa(\{X, Y, \N\}) + \lambda(X) \sH(X, \lambda_X) + \lambda(Y) \sH(Y, \lambda_Y) + \lambda(M) \sH(\pv)\\
 & = \sH_\lambda(\{X, Y, M\}) + \lambda(X) \sH(X, \lambda_X) + \lambda(Y) \sH(Y, \lambda_Y) + \lambda(M) \sH(M, \lambda_M)\\
 & = \sH(L, \lambda).
\end{align*}
Since $\lambda \res X \cup Y = \kappa \res X \cup Y$ and $\lambda(X), \lambda(Y) > 0$, the previous lemma implies that $(K, \kappa)$ is $R_\Gamma$-related to $(L, \lambda)$.
\end{proof}

The following is a technical lemma for handling the case of probability measures supported on only two or three points. We remind the reader that this is the key case to handle due to the prior work of Bowen \cite{B12b}.

\begin{lem} \label{lem:3less}
Let $\pv = (p_0, p_1, \ldots)$ be a probability vector with $p_0 \leq 1/2$ and $p_0 p_1 > 0$ and let $k \geq 4$ be any integer. Then there is a probability vector $\rv = (r_0, r_1, \ldots)$ having at least $4$ strictly positive terms and satisfying $\sH(\rv) = \sH(\pv)$ and $r_0^k \cdot r_1 = p_0^k \cdot p_1$.
\end{lem}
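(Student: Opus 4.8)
The plan is to introduce two real parameters and move continuously in parameter space, using the intermediate value theorem to match both the entropy constraint and the multiplicative constraint $r_0^k r_1 = p_0^k p_1$ simultaneously. Concretely, I would look for $\rv$ of the form $(r_0, r_1, r_2, r_3, 0, 0, \ldots)$ with four positive terms (this is allowed since we only need \emph{at least} four). Starting from $\pv$, first replace it by a vector with finitely many positive terms but the same entropy and the same value of $p_0^k p_1$ — one can split the tail mass $1 - p_0 - p_1$ (which may be spread over many coordinates) into finitely many atoms $r_2, r_3, \ldots, r_m$ with $\sum_{i\geq 2} r_i = 1 - p_0 - p_1$ and $\sum_{i \geq 2} -r_i \log r_i = -\sum_{i \geq 2} p_i \log p_i$; in fact since we are free to choose how fine the tail is, I can even arrange exactly the tail-entropy we need by a standard entropy-interpolation argument (the entropy of a partition of a mass-$s$ block into $n$ equal pieces is $s\log n - s\log s$, which is unbounded, so any prescribed tail entropy below the maximum is attainable, and a continuous deformation realizes it). After this first step we may assume $\pv$ itself has finitely many positive coordinates, say $\pv = (p_0, p_1, \ldots, p_m)$.

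Next, fix the tail block: write $t = 1 - p_0 - p_1$ for the combined mass outside the first two coordinates and $h_{\mathrm{tail}}$ for $\sum_{i \geq 2} -p_i\log p_i$. I want to vary $r_0$ and $r_1$ with $r_0 + r_1 = s$ for some $s \leq 1$ to be chosen, distributing the remaining mass $1 - s$ over coordinates $2, 3, \ldots$ in a way whose Shannon entropy I can tune freely to any value up to roughly $(1-s)\log(\text{number of pieces}) \to \infty$. So the construction has this structure: choose $r_0, r_1 > 0$; set $s = r_0 + r_1$; then there is a one-parameter family of tail distributions on the remaining mass $1-s$ whose entropy sweeps out an interval $[\,-s'\log s' \text{-type lower bound},\ \infty)$; pick the tail so that the total entropy equals $\sH(\pv)$. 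This reduces the problem to choosing the pair $(r_0, r_1)$ so that (a) $r_0^k r_1 = p_0^k p_1$ and (b) the "head entropy" $-r_0 \log r_0 - r_1 \log r_1$ plus the choosable tail entropy can still sum to $\sH(\pv)$ — i.e. the head entropy must not exceed $\sH(\pv)$ minus the minimum achievable tail entropy for the leftover mass $1-s$.

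The curve $\{(r_0,r_1) : r_0^k r_1 = p_0^k p_1,\ r_0, r_1 > 0\}$ passes through $(p_0, p_1)$ and, as $r_0 \to 0$, forces $r_1 \to \infty$, so it must leave the simplex; hence there is a sub-arc from $(p_0,p_1)$ along which $r_0$ decreases (and $r_1$ increases) until $r_0 + r_1$ first reaches, say, $1/2 + p_1$ or until we exit the region $r_0 \le 1/2$ — in any case we get a genuine arc with $r_0$ ranging over an interval $(c, p_0]$ with $c < p_0$, along which $s = r_0 + r_1$ varies continuously. Along this arc the head entropy $-r_0\log r_0 - r_1\log r_1$ is a continuous function; I just need \emph{one} point on the arc, other than $(p_0,p_1)$ itself if necessary, where $r_0 \ne p_0$ (guaranteeing a genuine fourth positive coordinate is forced, or more simply: at $(p_0,p_1)$ we can already take $r_2 = r_3 = $ half the tail if the tail is a single atom — but to be safe pick $r_0 \ne p_0$), and where the head entropy is small enough that the remaining budget $\sH(\pv) - (\text{head entropy})$ is $\ge$ the minimum tail entropy for mass $1-s$. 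Since as $r_0 \to c^+$ along the arc the mass $s$ approaches a value $< 1$ (so $1 - s$ stays bounded away from $0$, keeping the minimum tail entropy bounded) while the head entropy stays bounded, and since the tail entropy can be made \emph{arbitrarily large}, there is plenty of room: the total achievable entropy at each such point is an interval of the form $[\text{something finite}, \infty)$, and $\sH(\pv)$ — which is finite since $\pv$ has finitely many positive terms — lies in it as long as the finite lower endpoint does not exceed $\sH(\pv)$. Choosing $r_0$ close enough to $p_0$ makes the head entropy close to $-p_0\log p_0 - p_1 \log p_1 \le \sH(\pv)$ and makes $1 - s$ close to the original tail mass $t$, so the lower endpoint is close to the original minimum tail entropy $\le h_{\mathrm{tail}} \le \sH(\pv) + p_0\log p_0 + p_1 \log p_1$; a short computation confirms the budget inequality holds, and then fixing the tail to absorb the exact deficit completes the construction. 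The main obstacle is bookkeeping the joint constraint: one must verify that along the constraint curve $r_0^k r_1 = \text{const}$ there is a point where the head entropy is strictly below the threshold needed so that the (always-achievable-upward) tail entropy can close the gap — this is where the hypotheses $p_0 \le 1/2$, $p_0 p_1 > 0$, and $k \ge 4$ are used (to guarantee the curve stays in the simplex over a genuine interval and that decreasing $r_0$ from $p_0$ strictly decreases $s = r_0+r_1$ is \emph{not} automatic, so one checks the sign of $\frac{ds}{dr_0}$ along the curve, which is $1 - k r_1/r_0 = 1 - k p_1/p_0 \cdot(\text{factor})$; near $(p_0,p_1)$ with $p_0 \le 1/2$ and $k\ge 4$ one controls this), after which everything else is a continuity/IVT argument.
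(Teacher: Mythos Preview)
Your outline has a genuine gap at the exact point where the lemma is hard, namely the two-atom case $p_0+p_1=1$, $p_0\le 1/2$.  Two things go wrong simultaneously.

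First, the direction you propose is infeasible.  Along the constraint curve $r_1=C/r_0^k$ (with $C=p_0^kp_1$) one has $s(r_0)=r_0+C/r_0^k$ and $s'(r_0)=1-kr_1/r_0$; at $(p_0,p_1)$ this is $1-kp_1/p_0\le 1-k<0$ since $p_1\ge p_0$ and $k\ge 4$.  Hence decreasing $r_0$ from $p_0$ makes $s$ \emph{increase} above $1$, so that branch leaves the simplex immediately.  Only increasing $r_0$ produces $s<1$.

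Second, and more seriously, your key claim that for $(r_0,r_1)$ near $(p_0,p_1)$ the ``budget inequality'' $\sH(r_0,r_1,1-s)<\sH(\pv)$ holds is false.  In the two-atom case $t=0$, so at the starting point $1-s=0$ and $\sH(r_0,r_1,1-s)=\sH(p_0,1-p_0)=\sH(\pv)$ exactly.  As $r_0$ increases, $1-s$ grows from $0$ at a finite rate, but the new atom contributes $-(1-s)\log(1-s)$, whose derivative in $1-s$ is $-\log(1-s)-1\to+\infty$.  Thus $\sH(r_0,r_1,1-s)$ initially \emph{rises above} $\sH(\pv)$, not below.  (The same obstruction appears in the three-atom case, where one starts at equality $\sH(p_0,p_1,t)=\sH(\pv)$ and the sign of the first variation is not controlled by your hypotheses.)  So there is no nearby point on the curve at which the tail can absorb the deficit, and a continuity/IVT argument alone cannot close the proof: one must exhibit a point \emph{far} along the curve where the three-term entropy is strictly below $\sH(\pv)$, and verifying that requires a genuine quantitative estimate.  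This is precisely what the paper does: it sets $r_0=1-2p_0^k$, checks $r_0+r_1<1$ via an elementary inequality, and then proves $\sH(r_0,r_1,1-r_0-r_1)<\sH(p_0,1-p_0)$ by a Taylor expansion of the binary entropy at $1/2$.  The hypotheses $p_0\le 1/2$ and $k\ge 4$ are used in that computation, not merely to keep the curve in the simplex.
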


\begin{proof}
Set $r_0 = 1 - 2 p_0^k$ and set
$$r_1 = \left( \frac{p_0}{r_0} \right)^k \cdot p_1 = \left( \frac{p_0}{1 - 2 p_0^k} \right)^k \cdot p_1 \leq \left( \frac{p_0}{1 - 2 p_0^k} \right)^k \cdot (1 - p_0).$$
Then $r_0^k \cdot r_1 = p_0^k \cdot p_1$ as required.

We claim that $r_0 + r_1 < 1$. As $r_0 + r_1 \leq 1 - 2 p_0^k + \frac{p_0^k}{(1 - 2 p_0^k)^k}$, it suffices to show
$$1 - 2 p_0^k + \frac{p_0^k}{(1 - 2 p_0^k)^k} < 1,$$
or equivalently, $(1 - 2 p_0^k)^k > 1/2$. Since $p_0 \leq 1/2$ its enough to check $(1 - 2^{-k+1})^k > 1/2$, or equivalently
$$2^{-1/k} + 2^{-k+1} < 1.$$
The derivative with respect to $k$ of the expression on the left is $\ln(2) k^{-2} 2^{-1/k} - \ln(2) 2^{-k+1}$, which is $0$ when $k^2 2^{1/k} = 2^{k-1}$ or $k \approx 6.59$. So $2^{-1/k} + 2^{-k+1}$ is decreasing for $0 < k < 6.59$, and direct computation verifies that its value is below $1$ when $k = 4$, and for $k > 6.59$ the function $2^{-1/k} + 2^{-k+1}$ is increasing to $1$ as $k \rightarrow \infty$. Thus $2^{-1/k} + 2^{-k+1} < 1$ when $k \geq 4$, as required. We conclude that $r_0 + r_1 < 1$.

Next we claim that the probability vector $(r_0, r_1, 1 - r_0 - r_1)$ satisfies $\sH(r_0, r_1, 1 - r_0 - r_1) < \sH(p_0, 1 - p_0)$. We have
\begin{align*}
\sH(r_0, r_1, 1 - r_0 - r_1) & = \sH(r_0, 1 - r_0) + (1 - r_0) \cdot \sH \left( \frac{r_1}{1 - r_0}, \ \frac{1 - r_0 - r_1}{1 - r_0} \right)\\
 & \leq \sH(1 - 2 p_0^k, 2 p_0^k) + 2 p_0^k \cdot \log(2).
\end{align*}
So it suffices to show that $\sH(1 - 2 p_0^k, 2 p_0^k) + 2 p_0^k \cdot \log(2) < \sH(p_0, 1 - p_0)$.

Define $f : \{z \in \C : 0 < \Real(z) < 1\} \rightarrow \C$ by $f(z) = - z \log(z) - (1 - z) \log(1 - z)$, where $\log$ denotes the standard branch of the logarithm. Direct computation gives that the $n^{\text{th}}$ derivative at $z = 1/2$ is
$$f^{(n)}(1/2) = \begin{cases}
\log(2) & \text{if } n = 0\\
0 & \text{if } n \text{ is odd}\\
-2^n \cdot (n - 2)! & \text{if } n > 0 \text{ is even}.
\end{cases}$$
The function $f$ is a holomorphic function and thus for every $z$ satisfying $|z - 1/2| < 1/2$ we have
\begin{align*}
f(z) & = \sum_{n = 0}^\infty \frac{f^{(n)}(1/2)}{n!} \cdot (z - 1/2)^n\\
 & = \log(2) - 2 (z - 1/2)^2 + \sum_{m = 2}^\infty \frac{-2^{2m}}{2m \cdot (2m - 1)} \cdot (z - 1/2)^{2m}.
\end{align*}
Plugging in $z = p_0$ and $z = 2 p_0^k$ and using the inequalities $p_0 \leq 1/2$ and $k \geq 4$ we obtain
\begin{align*}
 & \sH(p_0, 1 - p_0)\\
 & = \log(2) - 2 (p_0 - 1/2)^2 + \sum_{m = 2}^\infty \frac{-2^{2m}}{2m \cdot (2m - 1)} \cdot (p_0 - 1/2)^{2m}\\
 & = \log(2) - \frac{1}{2} + 2 p_0 - 2 p_0^2 + \sum_{m = 2}^\infty \frac{-2^{2m}}{2m \cdot (2m - 1)} \cdot (p_0 - 1/2)^{2m}\\
 & \geq \log(2) - \frac{1}{2} + p_0 + \sum_{m = 2}^\infty \frac{-2^{2m}}{2m \cdot (2m - 1)} \cdot (p_0 - 1/2)^{2m}\\
 & > \log(2) - \frac{1}{2} + 8 p_0^k + \sum_{m = 2}^\infty \frac{-2^{2m}}{2m \cdot (2m - 1)} \cdot (2 p_0^k - 1/2)^{2m}\\
 & > \log(2) - \frac{1}{2} + 4 p_0^k - 8 p_0^{2k} + 2 p_0^k \cdot \log(2) + \sum_{m = 2}^\infty \frac{-2^{2m}}{2m \cdot (2m - 1)} \cdot (2 p_0^k - 1/2)^{2m}\\
 & = \log(2) - 2 (2 p_0^k - 1/2)^2 + 2 p_0^k \cdot \log(2) + \sum_{m = 2}^\infty \frac{-2^{2m}}{2m \cdot (2m - 1)} \cdot (2 p_0^k - 1/2)^{2m}\\
 & = \sH(1 - 2 p_0^k, 2 p_0^k) + 2 p_0^k \cdot \log(2),
\end{align*}
justifying our claim.

Finally, since $\sH(r_0, r_1, 1 - r_0 - r_1) < \sH(\pv)$, one can divide the remaining mass $1 - r_0 - r_1 > 0$ and extend $(r_0, r_1)$ to a probability vector $\rv = (r_i)$ having at least $4$ strictly positive terms and satisfying $\sH(\rv) = \sH(\pv)$.
\end{proof}

\begin{proof}[Proof of Prop. \ref{prop:trans}]
Fix a finite group $\Gamma$ with $|\Gamma| \geq 5$. Let $(L, \lambda)$ and $(K, \kappa)$ be probability spaces with $\sH(L, \lambda) = \sH(K, \kappa)$. If $|\supp(\lambda)|, |\supp(\kappa)| \geq 4$, then pick $0 < \alpha, \beta < \min(\epsilon_L, \epsilon_K)$, where $\epsilon_L$, $\epsilon_K$ come from applying Lemma \ref{lem:4more} to $(L, \lambda)$ and $(K, \kappa)$, respectively. Let $(L', \lambda')$ be the resulting space $R_\Gamma$-related to $(L, \lambda)$, and let $(K', \kappa')$ be the resulting space $R_\Gamma$-related to $(K, \kappa)$. Then $a, b \in L' \cap K'$, $\lambda'(a) = \kappa'(a) = \alpha > 0$, and $\lambda'(b) = \kappa'(b) = \beta > 0$, so $(L', \lambda')$ and $(K', \kappa')$ are $R_\Gamma$-related by Lemma \ref{lem:pairequal}. We conclude that $(L, \lambda) \ [R_\Gamma]^{\text{trans}} \ (K, \kappa)$ in this case. So it suffices to show that every non-trivial probability space $(L, \lambda)$ is $R_\Gamma$-related to a probability space $(M, \mu)$ with $|\supp(\mu)| \geq 4$.

So lets assume $|\supp(\lambda)| < 4$, as otherwise we are done. Then $\lambda$ is purely atomic and supported on either two or three points. So there are $\ell_0, \ell_1, \ell_2 \in L$ with $\lambda(\{\ell_0, \ell_1, \ell_2\}) = 1$. Set $\pv = (p_0, p_1, p_2)$ where $p_i = \lambda(\ell_i)$. By rearranging indices we may assume that $p_0 \leq 1/2$ and $p_0 p_1 > 0$. Set $k = |\Gamma| - 1 \geq 4$. By Lemma \ref{lem:3less}, there is a probability vector $\rv = (r_0, r_1, \ldots)$ having at least $4$ strictly positive terms and satisfying $r_0^k r_1 = p_0^k p_1$ and $\sH(\rv) = \sH(\pv)$.

Define the probability space $(M, \mu)$ by setting $M = \{\ell_0, \ell_1, m_2, m_3, \ldots\}$ and defining $\mu(\ell_0) = r_0$, $\mu(\ell_1) = r_1$, $\mu(m_i) = r_i$ ($i \geq 2$). Then $|\supp(\mu)| \geq 4$ and $\sH(M, \mu) = \sH(\rv) = \sH(\pv) = \sH(L, \lambda)$. Define
$$P = \{x \in \{\ell_0, \ell_1\}^\Gamma : \ell_1 \text{ occurs precisely once in } x\}.$$
Clearly each $x \in X$ has trivial $\Gamma$ stabilizer and
$$\lambda^\Gamma(x) = \lambda(\ell_0)^{|\Gamma| - 1} \lambda(\ell_1) = p_0^k p_1 = r_0^k r_1 = \mu(\ell_0)^{|\Gamma|-1} \mu(\ell_1) = \mu^\Gamma(x).$$
Thus $(M, \mu) \ R_\Gamma \ (L, \lambda)$.
\end{proof}

\section{The isomorphism theorem} \label{sec:iso}

In this section we prove the main theorem. We first note a group-theoretic fact that allows us, by the previous section, to significantly reduce the difficulty of the main theorem. We remark that, oddly, the lower-bound $5$ in the following lemma happens to be important because we don't know how to prove Lemma \ref{lem:3less}, or a functionally equivalent lemma, without this lower-bound (recall that lemma was applied with $k = |\Gamma| - 1 \geq 4$).

\begin{lem} \label{lem:findsub}
Let $G$ be a countably infinite group. Then either $G$ contains $\Z$ as a subgroup or else there is a finite subgroup $\Gamma \leq G$ with $|\Gamma| \geq 5$.
\end{lem}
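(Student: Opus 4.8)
The plan is to argue by cases on the torsion in $G$. First I would ask whether $G$ has an element of infinite order; if so, that element generates a copy of $\Z$ and we are done. So the substantive case is when $G$ is a countably infinite \emph{torsion} group, i.e. every element has finite order, and in that case the goal is to produce a finite subgroup of order at least $5$.

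Suppose for contradiction that $G$ is torsion but every finite subgroup has order at most $4$. The idea is to exploit the fact that there are very few isomorphism types of groups of order $\le 4$ — namely the trivial group, $\Z/2$, $\Z/3$, $\Z/4$, and $\Z/2 \times \Z/2$ — all of which are abelian, indeed of exponent dividing $12$. I would first note that every element $g \in G$ then has order $1, 2, 3,$ or $4$ (it generates a cyclic subgroup of order $\le 4$), so $g^{12} = 1$ for all $g$; thus $G$ has finite exponent. The key step is to leverage the bounded order of subgroups to conclude $G$ is abelian: take any two elements $a, b \in G$ and consider $H = \langle a, b \rangle$. If I can show $H$ is finite, then $|H| \le 4$ forces $H$ abelian, hence $ab = ba$; since $a, b$ were arbitrary, $G$ is abelian, and then $G$ itself is an infinite abelian torsion group with all finite subgroups of order $\le 4$ — but an infinite abelian group of bounded exponent contains $(\Z/p)^{\oplus \infty}$ for some prime $p$ (or a large direct sum), which certainly has finite subgroups of arbitrarily large order, a contradiction.

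The main obstacle, then, is showing that a two-generator subgroup $H = \langle a, b \rangle$ of such a $G$ is finite. This is where one naturally invokes the Burnside-type phenomenon, but we don't want to cite hard Burnside theory. Fortunately the exponent here is at most $12$, and more to the point we don't even need $H$ finite in general: it suffices to observe that \emph{any} subgroup of $G$ generated by two elements of order $\le 2$ is a finite dihedral or Klein group only if the product $ab$ also has small order, which may fail. So instead I would take the cleaner route: a group in which every element has order dividing $2$ is automatically abelian (since $xy = (xy)^{-1} = y^{-1}x^{-1} = yx$), so if every element of $G$ had order $\le 2$ we would be done as above. The remaining possibility is that $G$ has an element $c$ of order $3$ or $4$; I would then find a second such element or a conjugate and show $\langle c, gcg^{-1}\rangle$ is forced to be larger than order $4$ for suitable $g$, using that $G$ is infinite so $c$ has infinitely many distinct conjugates. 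Concretely, since $\langle c \rangle$ has index — wait, it need not have finite index; but the \emph{normalizer} argument works: $\langle c \rangle$ is not normal (else $G/\langle c\rangle$ would again be an infinite torsion group with subgroups of bounded order, and we could induct), so some conjugate $c' = gcg^{-1} \notin \langle c \rangle$, and $\langle c, c' \rangle$ then strictly contains $\langle c \rangle$, giving a subgroup of order $> 4$ once one checks it is still finite of the claimed form — the contradiction we seek.

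I expect the genuinely delicate point to be making the last step fully rigorous without circularity, since "induct on the quotient" presupposes the quotient is again a group, which it is only when $\langle c \rangle$ is normal; the clean fix is to phrase the whole argument as: \emph{assume $G$ is a minimal counterexample}, i.e. an infinite torsion group all of whose finite subgroups have order $\le 4$ and all of whose proper infinite subquotients... — actually the simplest rigorous path is probably to cite the classical fact that an infinite group in which every proper subgroup is finite, or an infinite group of bounded exponent that is locally finite, must contain arbitrarily large finite subgroups, together with the elementary observation that every two-generated (indeed finitely generated) subgroup of our $G$ is finite because it is a torsion group whose subgroups all have order $\le 4$ hence is locally finite and of bounded order. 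I would present the argument in that order: (1) infinite-order element gives $\Z$; (2) otherwise $G$ is torsion; (3) if all finite subgroups have order $\le 4$, every f.g.\ subgroup is finite of order $\le 4$, hence abelian, hence $G$ is abelian of exponent $\le 12$; (4) an infinite abelian group of finite exponent contains finite subgroups of unbounded order, contradiction.
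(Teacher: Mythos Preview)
Your overall skeleton is the same as the paper's: reduce to the torsion case, observe every element has order at most $4$, and then use local finiteness to manufacture a finite subgroup of order $\geq 5$. The paper's endgame is slightly more direct than yours (once $G$ is locally finite it simply takes any five elements $W$ and notes $\langle W\rangle$ is finite of order $\geq |W| = 5$; your detour through ``hence $G$ is abelian, hence infinite abelian of bounded exponent'' is correct but unnecessary), but the architecture is identical.

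The genuine gap is in your step~(3). You assert that ``every f.g.\ subgroup is finite of order $\le 4$'' and justify it by saying such a subgroup ``is a torsion group whose subgroups all have order $\le 4$ hence is locally finite.'' This is circular: the hypothesis is only that all \emph{finite} subgroups have order $\le 4$, and ``locally finite'' is precisely the conclusion you are trying to reach. Showing that a finitely generated group in which every element has order dividing $12$ (indeed, order $\le 4$) must be finite is \emph{not} elementary; it is a Burnside-type theorem, and the paper explicitly cites the literature for it (references on groups with element orders at most $4$ and on groups of exponent $12$). You correctly identified earlier in your sketch that ``this is where one naturally invokes the Burnside-type phenomenon,'' but you then tried to sidestep it and ended up assuming what you need. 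Without that citation (or an independent proof of local finiteness in this restricted setting), step~(3) does not go through, and none of your alternative sketches (the normalizer/conjugate argument, the ``minimal counterexample'' idea) actually closes the gap either---each one ultimately needs to know that some specific two-generator subgroup is finite.
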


\begin{proof}
If any $g \in G$ has order at least $5$ (or infinite) then we are done, as either $\langle g \rangle \cong \Z$ or else $\Gamma = \langle g \rangle$ is finite and has at least $5$ elements. So assume that $|\langle g \rangle| \leq 4$ for all $g \in G$. It follows from work inspired by the Burnside problem\footnote{Note that this does not follow from progress on the Burnside problem itself because we allow different group elements to have different orders, we just require that all orders be bounded by $4$} that every finitely generated subgroup of $G$ must be finite \cite{L, M}. In particular, if $W \subseteq G$ is any set of $5$ elements then $\Gamma = \langle W \rangle$ is a finite group of order at least $5$.
\end{proof}

We are now ready to prove the main theorem.

\begin{thm} \label{thm:iso}
Let $G$ be a countably infinite group. If $(L, \lambda)$ and $(K, \kappa)$ are standard probability spaces with $\sH(L, \lambda) = \sH(K, \kappa)$ then the corresponding Bernoulli shifts $G \acts (L^G, \lambda^G)$ and $G \acts (K^G, \kappa^G)$ are isomorphic.
\end{thm}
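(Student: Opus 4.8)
The plan is to reduce the theorem to the special case handled by the machinery built so far and by Bowen's prior work. By Lemma \ref{lem:findsub}, the group $G$ either contains $\Z$ as a subgroup or contains a finite subgroup $\Gamma$ with $|\Gamma| \geq 5$. In the first case, $\Z$ is an infinite amenable group, hence Ornstein by Ornstein--Weiss \cite{OW87}, and therefore by Stepin's observation \cite{St75} the group $G$ is Ornstein, which is exactly the conclusion we want; so that case is immediate. The substance of the proof is the second case, where we fix such a $\Gamma \leq G$.

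In the second case, I would first invoke Proposition \ref{prop:trans}: since $\sH(L, \lambda) = \sH(K, \kappa)$, the base spaces $(L, \lambda)$ and $(K, \kappa)$ are equivalent under $[R_\Gamma]^{\text{trans}}$, the transitive closure of $R_\Gamma$. Because isomorphism of Bernoulli shifts is an equivalence relation (in particular transitive), it therefore suffices to prove the following reduced statement: \emph{whenever $(L, \lambda) \ R_\Gamma \ (K, \kappa)$ with $\Gamma \leq G$, the Bernoulli shifts $G \acts (L^G, \lambda^G)$ and $G \acts (K^G, \kappa^G)$ are isomorphic.} This is precisely the claim flagged in Section \ref{sec:reduce} as what remains to be done, and I expect it to be the technical core of the paper (presumably carried out in the remainder of Section \ref{sec:iso}).

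To prove this reduced statement, I would exploit the structure of an $R_\Gamma$-relation directly. The condition gives a $\Gamma$-invariant set $P \subseteq (L \cap K)^\Gamma$ of positive measure on which $\lambda^\Gamma$ and $\kappa^\Gamma$ agree and on which every point has trivial $\Gamma$-stabilizer. Viewing $(L^G, \lambda^G)$ as an infinite product over the coset space $\Gamma \backslash G$ of copies of $(L^\Gamma, \lambda^\Gamma)$ (and similarly for $K$), the idea is to build the isomorphism $G$-coordinate-block by $G$-coordinate-block along $\Gamma$-cosets: wherever a block of coordinates lands in the common region $P$, the $\lambda$- and $\kappa$-pictures are literally identical and we can use the identity map there; on the complementary region we must "recode" using the copies of $L^\Gamma \setminus P$ versus $K^\Gamma \setminus P$, which have equal mass $1 - \lambda^\Gamma(P)$. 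The triviality of the $\Gamma$-stabilizer on $P$ is what lets us freely index within cosets without equivariance clashes, and $P$ having positive measure is what guarantees a definite positive density of "agreement" blocks, allowing a Borel--Cantelli / copying-and-filling scheme (in the spirit of Ornstein--Keane--Smorodinsky) to converge. Iterating such recodings and chaining them with an inverse tower should produce a $G$-equivariant measure isomorphism.

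The main obstacle, as I see it, is precisely this last construction: turning the raw combinatorial input of an $R_\Gamma$-relation into an honest $G$-equivariant isomorphism over an \emph{arbitrary} countably infinite group $G$, where we have no amenability, no Følner sequence, and no Rokhlin lemma to tile the group. One expects to need some substitute for these — e.g. a marker/tiling structure on $G$ coming from Bernoulliness itself, or Bowen's technique of comparing the shifts through an intermediate common factor and using relative entropy estimates to absorb the discrepancy. The delicate points will be (1) ensuring the recoding is defined almost everywhere and respects the shift, and (2) controlling the successive approximations so that the limiting map is genuinely invertible and measure-preserving; the positive-measure and trivial-stabilizer hypotheses in Definition \ref{defn:related} are exactly the levers that make this controllable, and handling the two-atom base case — which is the whole point of the paper over Bowen's theorem — is what forces the careful choice of $P$ built in Lemmas \ref{lem:3less} and \ref{lem:pairequal}.
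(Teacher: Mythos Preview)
Your reduction is exactly right: invoke Lemma \ref{lem:findsub}, dispose of the $\Z \leq G$ case via Stepin, and in the remaining case use Proposition \ref{prop:trans} and transitivity of isomorphism to reduce to a single $R_\Gamma$-related pair. That is precisely how the paper begins.

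Where your proposal diverges from the paper is in the core construction. You sketch an iterative ``copying-and-filling'' scheme with Borel--Cantelli-style convergence of successive recodings, and you flag as the main obstacle the absence of F{\o}lner/Rokhlin machinery for general $G$. The paper does \emph{not} attempt any such approximation procedure, and it is not clear that one could be made to work over an arbitrary group. Instead, the paper finds a copy of $\Z$ not in $G$ but in the \emph{pseudo-group} of the orbit equivalence relation of a common factor action, and applies Ornstein's classical theorem for $\Z$ once, directly. Concretely: one builds a common $G$-factor $(M^G,\mu)$ by collapsing each $\Gamma$-coset to $*$ whenever its label is outside $P$; one checks this factor action is essentially free; one chooses a Borel transversal $V$ for the $\Gamma$-action on $\{z : z(1_G)=*\}$ and an aperiodic $T$ in the full group of the restricted orbit equivalence relation on $V$. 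Lifting $T$ to $V_L = \theta_L^{-1}(V)$ and $V_K = \theta_K^{-1}(V)$, the labels of the ``bad'' $\Gamma$-cosets along a $T_L$-orbit form an i.i.d.\ $\Z$-sequence with base $(A,\alpha)$ (the normalized restriction of $\lambda^\Gamma$ to $L^\Gamma \setminus P$), and an entropy computation gives $\sH(A,\alpha)=\sH(B,\beta)$. Ornstein's theorem for $\Z$ then supplies an isomorphism $\zeta:(A^\Z,\alpha^\Z)\to(B^\Z,\beta^\Z)$, and the desired $\pi$ simply overwrites each bad coset with the $B$-value dictated by $\zeta$ applied along the $T_L$-orbit. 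The verification is a fiberwise check over the disintegration with respect to $\mu$; no limiting process or relative-entropy estimate is needed. (In particular, the paper does \emph{not} use Thouvenot's relative isomorphism theorem that Bowen required --- see the discussion in Section \ref{sec:discuss}.) The ingredients you list in passing --- the common factor, the trivial-stabilizer condition --- are indeed used, but the mechanism that replaces the Rokhlin lemma is the aperiodic $T$ on $V$, not an iterative recoding.
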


\begin{proof}
Stepin observed that the Ornstein isomorphism theorem for Bernoulli shifts over $\Z$ extends immediately to Bernoulli shifts over any group containing $\Z$ as a subgroup \cite{St75}. Thus we are done if $\Z \leq G$. So by Lemma \ref{lem:findsub} we can assume that $G$ possesses a subgroup $\Gamma$ with $5 \leq |\Gamma| < \infty$. Furthermore, since the relation of isomorphism is transitive, by Proposition \ref{prop:trans} it will suffice to consider the case where $(L, \lambda) \ R_\Gamma \ (K, \kappa)$.

Let $P \subseteq (L \cap K)^\Gamma$ be as in Definition \ref{defn:related}. In particular, $\lambda^\Gamma \res P = \kappa^\Gamma \res P$, $\lambda^\Gamma(P) > 0$, $P$ is $\Gamma$-invariant, and every point in $P$ has trivial $\Gamma$-stabilizer. If $\lambda^\Gamma(P) = 1$ then $\lambda$ must have no atoms since every point in $P$ has trivial $\Gamma$-stabilizer and $\Gamma$ is finite. Thus when $\lambda^\Gamma(P) = 1$ we must have that $(L, \lambda)$ and $(K, \kappa)$ are already isomorphic and we are done. So we can assume $0 < \lambda^\Gamma(P) = \kappa^\Gamma(P) < 1$.

Fix a new symbol $*$ not in $L \cup K$, and set $M = (L \cap K) \cup \{*\}$. Define the $G$-equivariant map $\theta_L : L^G \rightarrow M^G$ by
$$\theta_L(x)(g) = \begin{cases}
x(g) & \text{if } (g^{-1} \cdot x) \res \Gamma \in P\\
* & \text{otherwise}
\end{cases}$$
Also define $\theta_K : K^G \rightarrow M^G$ by the exact same rule. One can easily check that $\theta_L$ and $\theta_K$ are $G$-equivariant.

We claim that these two maps push-forward $\lambda^G$ and $\kappa^G$ to the same measure, $(\theta_L)_*(\lambda^G) = (\theta_K)_*(\kappa^G)$. Indeed, let us explicitly describe this common measure. First define a probability measure $\mu_0$ on $M^\Gamma$ by
$$\mu_0 = \lambda^\Gamma \res P + (1 - \lambda^\Gamma(P)) \cdot \delta_{*^\Gamma} = \kappa^\Gamma \res P + (1 - \kappa^\Gamma(P)) \cdot \delta_{*^\Gamma},$$
where $*^\Gamma$ is the element of $M^\Gamma$ that has constant value $*$, and $\delta_{*^\Gamma}$ is the point-mass. Notice that $\mu_0$ is $\Gamma$-invariant. Now for $g \Gamma \in G / \Gamma$ let $\mu_0^{g \Gamma}$ be the probability measure on $M^{g \Gamma} = \{z : g \Gamma \rightarrow M\}$ obtained by translating $\mu_0$, i.e. $\mu_0^{g \Gamma}(g A) = \mu_0(A)$. Since the maps $\theta_L$ and $\theta_K$ are determined by considering the labeling of each $\Gamma$-coset independently, and since $\lambda^G$ and $\kappa^G$ are i.i.d, its straight-forward to see that
$$(\theta_L)_*(\lambda^G) = \prod_{g \Gamma \in G / \Gamma} \mu_0^{g \Gamma} = (\theta_K)_*(\kappa^G),$$
justifying our claim. Let's denote the common measure $\mu = \prod_{g \Gamma \in G / \Gamma} \mu_0^{g \Gamma}$.

We will define an isomorphism $\pi : (L^G, \lambda^G) \rightarrow (K^G, \kappa^G)$ that preserves the common factor $(M^G, \mu)$, i.e. $\pi$ will satisfy $\theta_K \circ \pi = \theta_L$. In other words, any occurrence of a $\Gamma$-coset whose labeling is in $P$ will remain unchanged by $\pi$, while the labels of other $\Gamma$-cosets (the ones labeled with $*$'s in the factor $(M^G, \mu)$) will be overwritten by $\pi$.

We claim that the action $G \acts (M^G, \mu)$ is essentially free. Fix a group element $s \neq 1_G$ and fix $g \in G$. Suppose that $z \in M^G$ satisfies $(g s \cdot z) \res \Gamma = (g \cdot z) \res \Gamma$. If $g s = \gamma g$ with $\gamma \in \Gamma$, then it is necessary that $(g \cdot z) \res \Gamma$ be fixed by $\gamma \neq 1_\Gamma$. Since each labeling in $P$ has trivial stabilizer, it follows that $z \res g^{-1} \Gamma$ must be the constant $*$ function. Alternatively, if $g s \not\in \Gamma g$ then $s^{-1} g^{-1} \Gamma \neq g^{-1} \Gamma$ and the labeling of $z$ on $g^{-1} \Gamma$ determines the labeling of $z$ on $s^{-1} g^{-1} \Gamma$. Therefore the $\mu$-probability that $z \in M^G$ satisfies $(g s \cdot z) \res \Gamma = (g \cdot z) \res \Gamma$ is bounded above by the maximum of $1 - \lambda^\Gamma(P)$ and $\mu_0 \times \mu_0 (\{(r, r) : r \in M^\Gamma\})$, which is strictly less than $1$ and independent of $g$. The condition $(g s \cdot z) \res \Gamma = (g \cdot z) \res \Gamma$ is equivalent to a condition on $z \res (s^{-1} g^{-1} \Gamma \cup g^{-1} \Gamma)$. Since its easy to find an infinite collection $g_i$ with the sets $s^{-1} g_i^{-1} \Gamma \cup g_i^{-1} \Gamma$ pairwise disjoint, and since the labelings on these sets are {i.i.d.} with respect to $\mu$, it follows that $\mu(\{z \in M^G : s \cdot z = z\}) = 0$. This proves our claim.

Consider the set $\{z \in M^G : z(1_G) = *\}$. This set is $\Gamma$-invariant and has positive measure since $\lambda^\Gamma(P) < 1$. As $\Gamma$ is finite, we can pick a Borel set $V \subseteq \{z \in M^G : z(1_G) = *\}$ that meets every $\Gamma$-orbit in $\{z \in M^G : z(1_G) = *\}$ precisely once. Notice that the sets $\gamma \cdot V$, $\gamma \in \Gamma$, are pairwise-disjoint since $G$ acts freely on $(M^G, \mu)$. So $M^G$ is partitioned by the sets $M^G \setminus \Gamma \cdot V$ and $\gamma \cdot V$, $\gamma \in \Gamma$, and we have $M^G \setminus \Gamma \cdot V = \{z \in M^G : z \res \Gamma \in P\}$.

Let $E_G^{M^G} = \{(z, g \cdot z) : z \in M^G, \ g \in G\}$ be the $G$-orbit-equivalence relation on $M^G$. Since $\mu$ is $G$-ergodic and $\mu(V) > 0$, the restricted equivalence relation $E_G^{M^G} \cap V \times V$ on $V$ has infinite classes almost-everywhere. So we can fix an aperiodic element $T$ in the full group $[E_G^{M^G} \cap V \times V]$ \cite[Lem. 3.25]{JKL02} (i.e. $T : V \rightarrow V$ is a Borel bijection whose orbits are infinite almost-everywhere and for almost-every $z \in V$ we have $(z, T(z)) \in E_G^{M^G}$). For each $z \in V$ define $t(z) \in G$ by the rule $t(z) \cdot z = T(z)$ (this is well-defined almost-everywhere).

Set $V_L = \theta_L^{-1}(V)$ and $V_K = \theta_K^{-1}(V)$. Similarly lift $T$ to aperiodic transformations $T_L : V_L \rightarrow V_L$ and $T_K : V_K \rightarrow V_K$ by setting $T_L(x) = t \circ \theta_L(x) \cdot x$ and $T_K(y) = t \circ \theta_K(y) \cdot y$. Notice again we have $L^G$ is partitioned by the sets $L^G \setminus \Gamma \cdot V_L$ and $\gamma \cdot V_L$, $\gamma \in \Gamma$, and that $L^G \setminus \Gamma \cdot V_L = \{x \in L^G : x \res \Gamma \in P\}$ (and similarly for $K^G$ and $V_K$).

Define probability spaces $(A, \alpha)$ and $(B, \beta)$ by setting $A = L^\Gamma \setminus P$, $B = K^\Gamma \setminus P$, and letting $\alpha$ and $\beta$ be the normalized restrictions of $\lambda^\Gamma$ to $A$ and of $\kappa^\Gamma$ to $B$, respectively. Consider the systems $\Z \acts (A^\Z, \alpha^\Z)$ and $\Z \acts (B^\Z, \beta^\Z)$ where the action of $\Z$ is induced by the standard shift map $S : A^\Z \rightarrow A^\Z$, $S(x)(n) = x(n-1)$ (we will abuse notation and also let $S$ denote the shift map on $B^\Z$). Notice that
$$\sH(L^\Gamma, \lambda^\Gamma) = |\Gamma| \cdot \sH(L, \lambda) = |\Gamma| \cdot \sH(K, \kappa) = \sH(K^\Gamma, \kappa^\Gamma)$$
and if we write $(\lambda^\Gamma)_P$ and $(\kappa^\Gamma)_P$ for the normalized restrictions of $\lambda^\Gamma$ and $\kappa^\Gamma$ to $P$, respectively, then
\begin{align*}
\sH(A, \alpha) & = \frac{\sH(L^\Gamma, \lambda^\Gamma) - \sH_{\lambda^\Gamma}(\{P, L^\Gamma \setminus P\}) - \lambda^\Gamma(P) \sH(P, (\lambda^\Gamma)_P)}{1 - \lambda^\Gamma(P)}\\
 & = \frac{\sH(K^\Gamma, \kappa^\Gamma) - \sH_{\kappa^\Gamma}(\{P, K^\Gamma \setminus P\}) - \kappa^\Gamma(P) \sH(P, (\kappa^\Gamma)_P)}{1 - \kappa^\Gamma(P)}\\
 & = \sH(B, \beta).
\end{align*}
Therefore by Ornstein's original isomorphism theorem there is a $\Z$-equivariant isomorphism $\zeta : (A^\Z, \alpha^\Z) \rightarrow (B^\Z, \beta^\Z)$ \cite{Or70a, Or70b}.

We can now intuitively describe the isomorphism $\pi$. For each point $x \in L^G$, we leave the $\Gamma$-cosets that are labeled with a pattern from $P$ unchanged. In the remaining $\Gamma$-cosets, each coset has a unique point in $V_L$, and the transformation $T_L$ arranges these distinguished points into various $\Z$-orbits (in this description we are ignoring the difference between coordinates $g$ of the function $x$ and points $g^{-1} \cdot x$ in the orbit of $x$). For each distinguished point, the labeling of its $\Gamma$-coset is an element of $A$. So along the $\Z$-orbits created by $T_L$, we have bi-infinite sequences of elements from $A$ which we can apply $\zeta$ to in order to obtain bi-infinite sequences of elements of $B$. Each element of $B = K^\Gamma \setminus P$ is a $\Gamma$-labeling, and the newly acquired $B$-labels of the distinguished points tell them what the new labeling of their $\Gamma$-coset should be under $\pi$. We now proceed to define $\pi$ explicitly and verify that it is an isomorphism.

First define $f_L : V_L \rightarrow A^\Z$ by
$$f_L(x)(n)(\gamma) = T_L^{-n}(x)(\gamma) \qquad (n \in \Z, \ \gamma \in \Gamma),$$
and similarly define $f_K : V_K \rightarrow B^\Z$. Notice that $f_L$ is equivariant for $T_L$ and $S$:
$$(f_L \circ T_L(x))(n)(\gamma) = T_L^{1-n}(x)(\gamma) = f_L(x)(n-1)(\gamma) = (S \circ f_L(x))(n)(\gamma).$$
Similarly $f_K$ is equivariant for $T_K$ and $S$. Now define $\pi = \pi_L : (L^G, \lambda^G) \rightarrow (K^G, \kappa^G)$ by
$$\pi_L(x)(g) = \begin{cases}
x(g) & \text{if } g^{-1} \cdot x \not\in \Gamma \cdot V_L\\
\zeta \circ f_L(\gamma^{-1} g^{-1} \cdot x)(0)(\gamma^{-1}) & \text{if } g^{-1} \cdot x \in \gamma \cdot V_L, \ \gamma \in \Gamma.
\end{cases}$$
We similarly define $\pi_K : (K^G, \kappa^G) \rightarrow (L^G, \lambda^G)$, replacing $\zeta$ with $\zeta^{-1}$. It is easily checked that $\pi_L(x)(g) = \pi_L(g^{-1} \cdot x)(1_G)$ and therefore $\pi_L$ (and $\pi_K$) is $G$-equivariant. Note that if $x \not\in \Gamma \cdot V_L$ then $\pi_L(x) \res \Gamma = x \res \Gamma \in P$, while if $x \in \Gamma \cdot V_L$ then $x \res \Gamma \in A = L^\Gamma \setminus P$ and $\pi_L(x) \res \Gamma \in B = K^\Gamma \setminus P$ (and similarly if we swap $L$'s and $K$'s). In particular, $\theta_K \circ \pi_L = \theta_L$  and $\theta_L \circ \pi_K = \theta_K$ as originally desired.

It only remains to check that $\pi = \pi_L$ is an isomorphism. In fact we'll see that $\pi_L^{-1} = \pi_K$. The simplest way to do this is to check that $\pi_L$ induces measure-space isomorphisms between the fibers over $\theta_L$ and the corresponding fibers over $\theta_K$, and that fiber-wise $\pi_K = \pi_L^{-1}$. So let $\lambda^G = \int_{M^G} (\lambda^G)_z \ d \mu(z)$  and $\kappa^G = \int_{M^G} (\kappa^G)_z \ d \mu(z)$ be the disintegrations of $\lambda^G$ and $\kappa^G$ over $\mu$, respectively, and fix $z \in M^G$. We will check that $\pi_L$ induces an isomorphism between $(\lambda^G)_z$ and $(\kappa^G)_z$ with inverse $\pi_K$.

Due to the symmetry of information involved, we point out that anything we assert is true will remain true when $L$ and $K$ (and all of their associated objects and roles) are swapped. To enhance readability, in the remainder of the proof we will often avoid explicitly noting this.

Label the elements of the set $\{g \in G : g \cdot z \in V\}$ as $w_n^i$ ($n \in \Z$, $i \in I$), where $I$ is finite or countable and $T(w_n^i \cdot z) = w_{n+1}^i \cdot z$. Note that $(w_n^i \cdot z) \res \Gamma$ is the constant $*$ function, while if $g \not\in \Gamma \{w_n^i : n \in \Z, \ i \in I\}$ then $(g \cdot z) \res \Gamma \in P$. When $g \not\in \Gamma \{w_n^i : n \in \Z, \ i \in I\}$ we have $(g \cdot x) \res \Gamma = (g \cdot z) \res \Gamma$ for $(\lambda^G)_z$-almost-every $x \in L^G$. Moreover, this agreement is preserved by $\pi_L$. So we only need to pay attention to the joint distributions under $(\lambda^G)_z$ of the labelings $(w_n^i \cdot x) \res \Gamma$, $(n, i) \in \Z \times I$, and similarly for $(\kappa^G)_z$.

By construction, for $(\lambda^G)_z$-almost-every $x \in L^G$, the $w_n^i$ ($n \in \Z$, $i \in I$) are precisely those group elements $g$ for which $g \cdot x \in V_L$, and we have $T_L(w_n^i \cdot x) = w_{n+1}^i \cdot x$. Since under $(\lambda^G)_z$ the labelings $(w_n^i \cdot x) \res \Gamma$, $(n, i) \in \Z \times I$, are {i.i.d.} with distribution $\alpha$, and since
$$f_L(w_0^i \cdot x)(m) = T_L^{-m}(w_0^i \cdot x) \res \Gamma = (w_{-m}^i \cdot x) \res \Gamma,$$
we have that, if $x \in L^G$ is a $(\lambda^G)_z$-random point, then the random variables $f_L(w_0^i \cdot x)$, $i \in I$, and $\zeta \circ f_L(w_0^i \cdot x)$, $i \in I$, are {i.i.d.} with distribution $\alpha^\Z$ and {i.i.d.} with distribution $\beta^\Z$, respectively.

By inspecting the values $\pi_L(x)((w_n^i)^{-1} \gamma)$ in the definition of $\pi_L$, with $\gamma \in \Gamma$, we see that for $(\lambda^G)_z$-almost-every $x \in L^G$
$$(w_n^i \cdot \pi_L(x)) \res \Gamma = \zeta \circ f_L(w_n^i \cdot x)(0).$$
However, $f_L$ is equivariant for $(T_L, S)$ and $\zeta$ is $S$-equivariant, so we have
\begin{align}
(w_n^i \cdot \pi_L(x)) \res \Gamma & = \zeta \circ f_L \circ T_L^n(w_0^i \cdot x)(0)\label{eq:iso}\\
 & = S^n \circ \zeta \circ f_L(w_0^i \cdot x)(0) = \zeta \circ f_L(w_0^i \cdot x)(-n).\nonumber
\end{align}
So it follows from the previous paragraph that under $(\lambda^G)_z$ the labelings $(w_n^i \cdot \pi_L(x)) \res \Gamma \in B$ are {i.i.d.} with distribution $\beta$. This proves that $\pi_L$ pushes $(\lambda^G)_z$ forward to $(\kappa^G)_z$.

Lastly we check that $\pi_K = \pi_L^{-1}$. From (\ref{eq:iso}) we have
$$f_K(w_0^i \cdot \pi_L(x))(m) = T_K^{-m}(w_0^i \cdot \pi_L(x)) \res \Gamma = (w_{-m}^i \cdot \pi_L(x)) \res \Gamma = \zeta \circ f_L(w_0^i \cdot x)(m).$$
Therefore $f_K(w_0^i \cdot \pi_L(x)) = \zeta \circ f_L(w_0^i \cdot x)$. Now, by applying (\ref{eq:iso}) with the roles of $L$ and $K$ swapped, with $\zeta^{-1}$ in place of $\zeta$, and with $\pi_L(x)$ in place of $x$, we obtain 
\begin{align*}
(w_n^i \cdot \pi_K \circ \pi_L(x)) \res \Gamma & = \zeta^{-1} \circ f_K(w_0^i \cdot \pi_L(x))(-n)\\
 & = f_L(w_0^i \cdot x)(-n) = T_L^n(w_0^i \cdot x) \res \Gamma = (w_n^i \cdot x) \res \Gamma
\end{align*}
We conclude that $\pi_K = \pi_L^{-1}$.
\end{proof}

\section{Finitary isomorphisms} \label{sec:finitary}

We will modify the proof of our main theorem by applying the following two lemmas.

\begin{lem} \label{lem:newv}
The set $V$ referred to in the proof of Theorem \ref{thm:iso} can be chosen such that there is a $G$-invariant conull set $Z \subseteq M^G$ with $V \subseteq Z$ clopen in the subspace topology on $Z$.
\end{lem}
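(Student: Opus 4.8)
The plan is to revisit the construction of $V$ in the proof of Theorem \ref{thm:iso} and replace the purely measure-theoretic selection of a $\Gamma$-transversal by one that is topologically nice on a carefully chosen conull $G$-invariant set. Recall that $V$ was chosen to be a Borel set meeting every $\Gamma$-orbit in $\{z \in M^G : z(1_G) = *\}$ exactly once. The set $\{z \in M^G : z(1_G) = *\}$ is already clopen in $M^G$ (when $M$ is finite it is a cylinder set, so it is clopen; when $M$ is countably infinite it is closed and open in the product topology because it depends on a single coordinate, and this is the only case we need since $L,K$ are finite here, so $M$ is finite). The issue is only that a measurable transversal for the $\Gamma$-action on this set need not be relatively clopen. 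To fix this, first I would discard a $G$-invariant null set. Since $G \acts (M^G, \mu)$ is essentially free (as shown in the proof of Theorem \ref{thm:iso}), the set $Z_0$ of points with trivial $G$-stabilizer is conull and $G$-invariant; intersecting with $\{z : z \res \Gamma \in P\} \cup \{z: z(1_G) = *\}$-type saturations does no harm. On $Z_0$, for each $z$ in the clopen set $C := \{z : z(1_G) = *\}$, the $\Gamma$-orbit of $z$ has exactly $|\Gamma|$ distinct points, all lying in the clopen set $\bigcup_{\gamma \in \Gamma}\{z : z(\gamma^{-1}) = *\}$.

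The key step is to make a continuous choice of transversal on a conull $G$-invariant $Z$. Here I would use a linear-order trick on finite alphabets: fix a finite generating-type exhaustion $G = \bigcup_n F_n$ with $F_n$ finite increasing, fix a linear order on the finite set $M$, and order the finitely many distinct points $\{\gamma \cdot z : \gamma \in \Gamma\}$ of each $\Gamma$-orbit lexicographically according to their restrictions to $F_n$ for the least $n$ that distinguishes them all (such an $n$ exists for every $z \in C$, since the orbit points are pairwise distinct functions on $G$). Declare $V$ to consist of those $z \in C$ that are lexicographically least in their $\Gamma$-orbit under this rule. The point is that the relation "$z$ is lex-least in its $\Gamma$-orbit, decided by $F_n$" is, for each fixed $n$, a clopen condition (it depends only on coordinates in $\bigcup_{\gamma}\gamma F_n$, a finite set), so $V$ is a countable union of clopen pieces $V_n = \{z \in C : n \text{ is least such that } F_n \text{ distinguishes the }\Gamma\text{-orbit, and } z \text{ is lex-least}\}$, and each $V_n$ is clopen in $C$. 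To upgrade "$V$ is an increasing union of relatively clopen sets" to "$V$ is relatively clopen in some conull $G$-invariant $Z$", I would take $Z$ to be the $G$-saturation of the set of points $z$ such that (i) $z$ has trivial $G$-stabilizer and (ii) for the clopen decomposition above, $z$ lies in the interior of $V_{n(z)}$ relative to $C$ where $n(z)$ is its defining index — more cleanly: let $Z$ be the $G$-saturation of $\{z \in M^G : z \text{ has trivial }G\text{-stabilizer}\}$, note this is conull and $G$-invariant, and observe that on $Z$ every $z \in C$ has all $\Gamma$-orbit points distinct, so the "least distinguishing $n$" is a locally constant function of $z$ on $Z \cap C$ — hence $V \cap Z$ is open in $Z \cap C$; applying the same argument to the complement $C \setminus V$ shows $V \cap Z$ is also closed in $Z \cap C$, and since $C$ is clopen in $M^G$ we get $V \cap Z$ clopen in $Z$. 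Finally replace $V$ by $V \cap Z$.

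The main obstacle I anticipate is the verification that the "least distinguishing index" $n(z)$ is locally constant on the conull set $Z$: one must be careful that discarding only a null set suffices to make this work, rather than needing to remove a meager-but-positive-measure set. The resolution is exactly that freeness of the action is a conull condition, and once all $|\Gamma|$ orbit points of $z$ are genuinely distinct functions $G \to M$, there is a finite window $F_n$ separating all $\binom{|\Gamma|}{2}$ pairs, and every $z'$ agreeing with $z$ on $\bigcup_\gamma \gamma F_n$ has the same separating window and the same lex-least representative; this is an open condition. Thus $n(z)$ is continuous (locally constant) on $Z \cap C$, which is all that is needed. Everything else — that $V \cap Z$ still meets every $\Gamma$-orbit in $Z \cap C$ exactly once, that $\mu(V \cap Z) = \mu(V) > 0$, that the sets $\gamma \cdot (V \cap Z)$ remain pairwise disjoint — follows verbatim from the original argument in the proof of Theorem \ref{thm:iso} together with the fact that $Z$ is $G$-invariant and conull.
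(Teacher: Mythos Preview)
Your argument is correct and gives a genuinely different, more hands-on route than the paper's. The paper restricts to the $G$-invariant conull free part $Z \subseteq M^G$, sets $Y = \{z \in Z : z(1_G) = *\}$, and then simply invokes the marker lemma (Lemma~\ref{lem:marker}, from \cite{SS,GJ}) with $F = \Gamma$ to obtain a relatively clopen $D \subseteq Y$ satisfying $\Gamma d \cap D = \{d\}$ and $Y \subseteq \Gamma \cdot D$; these two conditions say exactly that $V := D$ is a $\Gamma$-transversal of $Y$. Your lexicographic-least selection is essentially a direct proof of this special case of the marker lemma: once the action is free on $Z$, the least $n$ for which $F_n$ separates all $|\Gamma|$ orbit points is a locally constant function on $Z \cap C$, and both ``$z$ is lex-least'' and ``$z$ is not lex-least'' are unions over $n$ of clopen conditions, so $V$ and its complement in $C \cap Z$ are each relatively open. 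The paper's approach is shorter because it outsources the work to a cited lemma; yours is more elementary and self-contained, and incidentally makes transparent why finiteness of $\Gamma$ is what makes a \emph{clopen} transversal available. One small point worth tightening in a final write-up: $C = \{z : z(1_G) = *\}$ is only $\Gamma$-invariant on the support of $\mu$ (where $z(1_G) = *$ forces $z \res \Gamma = *^\Gamma$), so your $Z$ should be taken as the intersection of the free part with the $G$-saturate of $\{z : z \res \Gamma \in P \cup \{*^\Gamma\}\}$; you allude to this with your remark about ``saturations,'' and it is harmless since both sets are $G$-invariant and conull.
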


\begin{lem} \label{lem:newt}
Let $V$ be as given by the previous lemma. The transformation $T : V \rightarrow V$ referred to in the proof of Theorem \ref{thm:iso} can be chosen such that there is a $G$-invariant conull set $Z' \subseteq M^G$ and a continuous function $t : \Z \times (V \cap Z') \rightarrow G$ such that $t(n, v) \cdot z = T^n(v)$ for all $n \in \Z$ and all $v \in V \cap Z'$.
\end{lem}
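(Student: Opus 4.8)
The plan is to prove Lemma \ref{lem:newt} by upgrading the abstract existence of an aperiodic element of the full group (obtained via \cite[Lem.~3.25]{JKL02}) to a \emph{continuous} choice, exploiting the clopen structure of $V$ guaranteed by Lemma \ref{lem:newv}. First I would fix the $G$-invariant conull set $Z \subseteq M^G$ on which $V$ is clopen, and recall that on $Z$ the orbit equivalence relation $E_G^{M^G}$ is generated by a free continuous action of the countable group $G$, so that each equivalence class carries a canonical $G$-labeling; in particular, for $z, z' \in Z$ in the same orbit there is a unique $g \in G$ with $g \cdot z = z'$, and the map $(z, g) \mapsto g \cdot z$ exhibits every restricted relation as a \emph{countable union of graphs of partial homeomorphisms with clopen domains}. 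Concretely, $E_G^{M^G} \cap (V \times V)$ restricted to $Z$ is the union over $g \in G$ of the sets $\{(v, g \cdot v) : v \in V \cap g^{-1}V \cap Z\}$, and each such $v$-set is clopen in $Z$ since $V$ is clopen and translation by $g$ is a homeomorphism.

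The key step is then to construct the aperiodic transformation $T$ by a greedy/marker argument carried out continuously. Enumerate $G = \{g_0 = 1_G, g_1, g_2, \dots\}$. I would build $T$ as a Borel bijection of $V$ (modulo a conull set) whose graph is contained in the clopen pieces above and whose every orbit is infinite; the standard construction of such a $T$ proceeds by recursively designating, for each point, its $T$-successor among the available orbit-mates using the least-index group element that keeps the partially-defined $T$ injective and acyclic. Because $V$ is clopen in $Z$ and each candidate set $\{v \in V \cap Z : g_i \cdot v \in V,\ g_i \cdot v \text{ not yet used}\}$ is obtained from clopen sets by finite Boolean operations and translations (all continuous on $Z$), the resulting decision "send $v$ to $g_{i(v)} \cdot v$" partitions $V \cap Z$ (up to a conull $G$-invariant refinement $Z'$) into countably many clopen pieces, on each of which $T$ agrees with a fixed translation. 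On the clopen piece where $T$ acts as $g \cdot$, we simply set $t(1, v) = g$; extending multiplicatively, $t(n, v) \cdot z = T^n(v)$ is then continuous in $v$ for each fixed $n$, hence continuous on $\Z \times (V \cap Z')$ with $\Z$ discrete. Aperiodicity of $T$ is arranged exactly as in \cite[Lem.~3.25]{JKL02} — the greedy construction never closes a cycle and each orbit is visited infinitely often — and this property is insensitive to the clopen bookkeeping.

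The main obstacle I anticipate is ensuring simultaneously that (a) every orbit of $T$ is infinite (aperiodicity), (b) the partition of $V$ into "translate-by-$g$" pieces is \emph{clopen} and not merely Borel, and (c) the construction respects a conull $G$-invariant set uniformly. Point (b) is where the clopenness of $V$ from Lemma \ref{lem:newv} is essential: without it, the natural marker constructions produce only Borel pieces and no continuous $t$. The resolution is to note that the obstruction sets encountered at each stage of the recursion ("$v$ whose intended $T$-successor has already been claimed as someone's successor, or would create a cycle of length $\leq$ current stage") are again finite Boolean combinations of clopen-on-$Z$ sets, because "already claimed" refers only to finitely many previously-processed clopen pieces and finitely many group elements, and "would create a cycle" is a condition on finitely many coordinates of the $G$-labeling, hence clopen. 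One must also discard a $G$-invariant null set on which the relevant relations fail to have infinite classes (which has full measure by the ergodicity argument already in the proof of Theorem \ref{thm:iso}) and intersect with $Z$ from Lemma \ref{lem:newv}; the intersection $Z'$ of all these is still $G$-invariant and conull, and on $V \cap Z'$ the function $t$ is continuous as required.
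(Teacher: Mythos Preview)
Your greedy construction has a real gap at the point where you claim the condition ``would create a cycle of length $\leq$ current stage'' is clopen. After stage $i$ the partial $T$-orbits use cocycle values from $\{g_0,\dots,g_{i-1}\}$, but their lengths are not bounded by $i$: at each stage an entire clopen set of points receives a successor simultaneously, so partial orbits can already be arbitrarily long, and detecting whether adjoining one more edge closes a cycle requires following the orbit an unbounded number of steps. More seriously, even granting cycle-avoidance, the procedure builds only a partial injection; you give no argument that its domain and range are conull in $V$. A point $v$ may simply never acquire a successor if every $g_i\cdot v\in V$ has already been claimed by the time stage $i$ is reached, and dually some points may never appear in the range. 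The construction in \cite[Lem.~3.25]{JKL02} does not proceed by this kind of greedy matching, so appealing to it for aperiodicity does not help.

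The paper's proof takes a different route that sidesteps all three issues (clopenness, bijectivity, aperiodicity) at once. It iterates the marker lemma to produce clopen sets $D_n\subseteq V$ with increasing separation, together with cocycle-continuous at-least-two-to-one ``ownership'' maps $\ell_n^{n+1}:D_n\to D_{n+1}$. These induce an increasing sequence of finite equivalence relations $E_n$ on $V=D_0$, and within each $E_n$-class the enumeration of $G$ gives a canonical linear order; $T$ is defined to send each point to its successor in this order, linking the last point of one $E_n$-class to the first point of the next $E_n$-class inside the containing $E_{n+1}$-class. Every ingredient (the $D_n$, the ownership maps, the first/last/next selectors) is visibly cocycle-continuous, so $T$ and $T^{-1}$ are cocycle-continuous on explicitly described clopen domains. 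The only points where $T$ (resp.\ $T^{-1}$) remains undefined are those that are ``last'' (resp.\ ``first'') in their $E_n$-class for every $n$; since each $E_n$-class has cardinality at least $2^n$, these exceptional sets have measure at most $2^{-n}$ and their intersection is null. This tower approach is what delivers bijectivity, infinite orbits, and continuity simultaneously.
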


Before we prove these lemmas, let's first use them to prove the finitary isomorphism theorem while the proof of Theorem \ref{thm:iso} is still fresh in our minds. Recall that a measure-preserving map between topological measure spaces is finitary if there is a conull set on which the restricted function is continuous.

\begin{thm}
Let $G$ be a countably infinite group and let $(L, \lambda)$ and $(K, \kappa)$ be standard probability spaces with $L$ and $K$ finite. If $\sH(L, \lambda) = \sH(K, \kappa)$ then the Bernoulli shifts $G \acts (L^G, \lambda^G)$ and $G \acts (K^G, \kappa^G)$ are finitarily isomorphic.
\end{thm}

\begin{proof}
The proof mostly follows the proof of Theorem \ref{thm:iso} but with a few modifications. First suppose that $\Z \leq G$. We will detail, and modify, the observation of Stepin \cite{St75} we used before. Say $u \in G$ is of infinite order. Define $q : L^G \rightarrow L^\Z$ by $q(x)(n) = x(u^n)$. By the Keane--Smorodinsky theorem, there is a finitary isomorphism $\psi : (L^\Z, \lambda^\Z) \rightarrow (K^\Z, \kappa^\Z)$. Now define $\pi : (L^G, \lambda^G) \rightarrow (K^G, \kappa^G)$ by
$$\pi(x)(g) = \psi \circ q(g^{-1} \cdot x)(0).$$
It is easily checked that $\pi$ is $G$-equivariant, and since for $w \in G$ and $m \in \Z$ we have
$$\pi(x)(w u^m) = \psi \circ q(u^{-m} w^{-1} \cdot x)(0) = S^{-m} \circ \psi \circ q(w^{-1} \cdot x)(0) = \psi \circ q(w^{-1} \cdot x)(m),$$
its straight-forward to see that $\pi$ pushes $\lambda^G$ forward to $\kappa^G$. Using $\psi^{-1}$ one can make an analogous definition to obtain the inverse to $\pi$. So $\pi$ is an isomorphism. Finally, since $q$ is continuous and $\psi$ is finitary, it follows that $\pi$ is finitary as well.

In the remaining case we assume that $G$ has a subgroup $\Gamma$ satisfying $5 \leq |\Gamma| < \infty$. Looking back at the proof of Proposition \ref{prop:trans}, our argument actually shows that, since $|L|, |K| < \infty$, $(L, \lambda)$ and $(K, \kappa)$ can be connected by a finite sequence of \emph{finite} probability spaces that are sequentially $R_\Gamma$-related. Since finitary isomorphism is transitive, it follows that we can once again assume that $(L, \lambda) \ R_\Gamma \ (K, \kappa)$. The remainder of the proof is the same as before except that we use Lemmas \ref{lem:newv} and \ref{lem:newt} to obtain $V$ and $T$, and instead of obtaining $\zeta : (A^\Z, \alpha^\Z) \rightarrow (B^\Z, \beta^\Z)$ from Ornstein's isomorphism theorem, we instead obtain it from the Keane--Smorodinsky theorem so that it is finitary. We now must only check that the map $\pi$ constructed as before is finitary.

Recall that the maps $\theta_L : (L^G, \lambda^G) \rightarrow (M^G, \mu)$, $f_L : V_L \rightarrow A^\Z$, and $\pi : (L^G, \lambda^G) \rightarrow (K^G, \kappa^G)$ were defined as:
\begin{align*}
\theta_L(x)(g) & = \begin{cases}
x(g) & \text{if } (g^{-1} \cdot x) \res \Gamma \in P\\
* & \text{otherwise}
\end{cases}\\
f_L(x)(n) & = T_L^{-n}(x) \res \Gamma\\
\pi(x)(g) & = \begin{cases}
x(g) & \text{if } g^{-1} \cdot x \not\in \Gamma \cdot V_L\\
\zeta \circ f_L(\gamma^{-1} g^{-1} \cdot x)(0)(\gamma^{-1}) & \text{if } g^{-1} \cdot x \in \gamma \cdot V_L, \ \gamma \in \Gamma.
\end{cases}
\end{align*}
It is evident that $\theta_L$ is continuous. Therefore, since $V$ comes from Lemma \ref{lem:newv}, there is a $G$-invariant conull set $X \subseteq L^G$ so that $V_L = \theta_L^{-1}(V) \subseteq X$ is relatively clopen in $X$. Similarly, as $T$ comes from Lemma \ref{lem:newt}, there is a $G$-invariant conull set $X' \subseteq X$ such that the map $x \in V_L \cap X' \mapsto t(n, \theta_L(x)) \in G$ is continuous for every $n \in \Z$. Since for $x \in V_L \cap X'$ we have
$$f_L(x)(n) = T_L^{-n}(x) \res \Gamma = (t(-n, \theta_L(x)) \cdot x) \res \Gamma,$$
it follows that $f_L$ is continuous on $V_L \cap X'$.

We saw in the proof of Theorem \ref{thm:iso} that the normalized restriction of $\lambda^G$ to $V_L$ gets pushed forward by $f_L$ to $\alpha^\Z$. Since $\zeta$ is finitary there is a conull $\Z$-invariant set $\Omega \subseteq A^\Z$ on which $\zeta$ is continuous. Now let $X''$ be the set of $x \in X' \subseteq L^G$ such that $f_L(g \cdot x) \in \Omega$ whenever $g \cdot x \in V_L$. Then $X''$ is $G$-invariant and conull, and $\zeta \circ f_L$ is continuous on $X'' \cap V_L$. It follows that $\pi$ is continuous on $X''$ and thus $\pi$ is finitary.
\end{proof}

We now turn to the proofs of Lemma \ref{lem:newv} and \ref{lem:newt}. We will rely upon a well-known marker lemma that has appeared in various works. The version we state below comes from \cite[Lem. 3.1]{SS} which itself is a mild generalization of \cite[Lem. 2.1]{GJ}.

\begin{lem} \label{lem:marker}
Let $G \acts X$ be a free continuous action on a zero-dimensional space $X$, let $Y \subseteq X$ be clopen, and let $F \subseteq G$ be finite with $1_G \in F$. Then there is a clopen set $D \subseteq Y$ such that
\begin{enumerate}
\item[\rm (i)] $F d \cap D = \{d\}$ for all $d \in D$; and
\item[\rm (ii)] $Y \subseteq F \cdot D$.
\end{enumerate}
\end{lem}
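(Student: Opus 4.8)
The plan is to prove this by the standard ``marker'' (Rokhlin--tower) construction: reduce to a finite clopen cover of $Y$ by sets that are small relative to $F$, and then build $D$ by a finite greedy sweep over that cover. Since everything must stay inside the clopen $\sigma$-algebra, we may and do assume $X$ is compact (which is the case in the application, where $X = M^G$ with $M$ finite); replacing $F$ by $F \cup F^{-1}$ at the outset, we may also assume $F$ is symmetric, as this only strengthens condition (i).

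First I would establish the local step. Because the action is free, for each $x \in Y$ the $|F|$ points $\{f x : f \in F\}$ are pairwise distinct, so by continuity of the action and Hausdorffness together with zero-dimensionality, $x$ has a clopen neighborhood $U_x \subseteq Y$ whose translates $\{f U_x : f \in F\}$ are pairwise disjoint. Using compactness of $Y$ (a closed subset of the compact space $X$) I extract a finite subcover, and using zero-dimensionality I disjointify it into a clopen partition $Y = C_1 \sqcup \cdots \sqcup C_m$ in which every $C_j$ still has pairwise disjoint $F$-translates; equivalently $e C_j \cap C_j = \emptyset$ for all $e \in F \setminus \{1_G\}$.

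Next comes the greedy sweep. Set $D^{(0)} = \emptyset$, and for $j = 1, \dots, m$ put
$$D_j = \{x \in C_j : F x \cap D^{(j-1)} = \emptyset\}, \qquad D^{(j)} = D^{(j-1)} \cup D_j,$$
noting $D_j = C_j \setminus \bigcup_{f \in F} f^{-1} D^{(j-1)}$ is clopen because $F$ is finite and $D^{(j-1)}$ is clopen; finally set $D = D^{(m)}$. For (i): two points of $D$ in a common piece $C_j$ cannot be $F$-translates of each other by the choice of the partition, while if $d_1, d_2 \in D$ lie in pieces with indices $i \le j$ and $d_2 = f d_1$ with $f \in F \setminus \{1_G\}$, then at stage $j$ we discarded every point of $F D^{(j-1)} \supseteq F d_1 \ni d_2$ (using that $F$ is symmetric so the roles of $d_1$ and $d_2$ are interchangeable), contradicting $d_2 \in D_j$; hence $D$ is $F$-separated, which is exactly (i). For (ii): any $y \in Y$ lies in some $C_j$, and if $y \notin D$ it was discarded at stage $j$ precisely because some $f y \in D^{(j-1)} \subseteq D$ with $f \in F$, whence $y = f^{-1}(f y) \in F \cdot D$; so $Y \subseteq F \cdot D$.

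The main obstacle I anticipate is getting both halves of the conclusion simultaneously while remaining within the clopen sets: the discard rule in the greedy sweep must be tight enough to preserve the separation in (i) yet loose enough that every discarded point stays $F$-reachable from $D$ for (ii), and the whole argument works only because the cover is finite, so that all the accumulated unions $D^{(j)}$ remain clopen -- which is where compactness of $X$ enters. A genuinely non-compact zero-dimensional $X$ would call for a Borel-marker substitute; I would simply restrict to the compact case that the lemma is invoked for.
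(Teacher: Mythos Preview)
The paper does not supply its own proof of this lemma; it simply cites \cite[Lem.~3.1]{SS} (itself generalizing \cite[Lem.~2.1]{GJ}). So there is no in-paper argument to compare against, and your greedy marker construction is indeed the standard one.

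There is, however, a real gap in your reduction to compact $X$. In the applications (Lemmas~\ref{lem:newv} and~\ref{lem:newt}) the marker lemma is invoked not on $M^G$ but on a $G$-invariant conull subset $Z \subseteq M^G$ on which the action is free; even with $M$ finite and $M^G$ compact, this $Z$ is only a $G_\delta$ set and need not be compact, so the extra hypothesis you impose is not actually available where the lemma is used. The fix is small and does not require a Borel substitute: since $Z$ is second countable, your local step yields a \emph{countable} clopen partition $Y = \bigsqcup_{j \geq 1} C_j$, and the greedy sweep runs verbatim to produce an open $D = \bigcup_j D_j$ satisfying (i) and (ii) by exactly your argument. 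The one extra observation needed is that $D$ is closed: by (i) the set $D$ is disjoint from $\bigcup_{f \in F \setminus \{1_G\}} f \cdot D$, and by (ii) every point of $Y \setminus D$ lies in that union, so
\[
Y \setminus D \;=\; Y \cap \bigcup_{f \in F \setminus \{1_G\}} f \cdot D
\]
is open (as $D$ is open and each $f$ acts by homeomorphism), whence $D$ is clopen.

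A smaller point: replacing $F$ by $F \cup F^{-1}$ leaves (i) unchanged but genuinely \emph{weakens} (ii), since $Y \subseteq (F \cup F^{-1}) \cdot D$ does not imply $Y \subseteq F \cdot D$. In the paper's two invocations $F$ is already symmetric (it is the subgroup $\Gamma$, respectively a set of the form $F_n^{-1} F_n$), so this does no harm there; but as a proof of the lemma in the generality stated it is incomplete.
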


We can now prove the two lemmas from the start of the section.

\begin{proof}[Proof of Lemma \ref{lem:newv}]
Let $G \acts (M^G, \mu)$ be the factor action constructed in the proof of Theorem \ref{thm:iso}. We argued in that proof that this action is essentially free. So let $Z \subseteq M^G$ be a $G$-invariant conull set on which the action is free everywhere, not just almost-everywhere. Since $M^G$ is zero-dimensional and the action of $G$ on $M^G$ is continuous, we get that $Z$ is zero-dimensional and $G \acts Z$ is continuous. Set $Y = \{z \in Z : z(1_G) = *\}$ and set $F = \Gamma$ and apply Lemma \ref{lem:marker} to get a relatively clopen set $V = D \subseteq Y$. Clause (i) of that lemma implies that $V$ meets every $\Gamma$-orbit in $Y$ at most once, and clause (ii) implies that $V$ meets every $\Gamma$-orbit in $Y$ at least once. Thus $V$ meets every $\Gamma$-orbit in $\{z \in Z : z(1_G) = *\}$ precisely once. So $V$ has the properties required of it in the proof of Theorem \ref{thm:iso} and it is relatively clopen as a subset of the $G$-invariant conull set $Z$.
\end{proof}

\begin{proof}[Proof of Lemma \ref{lem:newt}]
Consider for the moment a free continuous action $G \acts X$. Lets call a function $\phi$ \emph{cocycle-continuous} if $\dom(\phi) \subseteq X$ and there is a continuous function $\bar{\phi} : \dom(\phi) \rightarrow G$ satisfying $\phi(z) = \bar{\phi}(z) \cdot z$ for all $z \in \dom(\phi)$. Notice that, since $G$ acts continuously, cocycle-continuous functions are continuous. Additionally, if $\dom(\phi)$ is clopen then $\phi$ is both an open and a closed map (in particular, the image under $\phi$ of a clopen set is clopen). Also notice that if $\phi$ and $\psi$ are both cocycle-continuous maps and $\rng(\phi) \subseteq \dom(\psi)$ then $\psi \circ \phi$ is cocycle-continuous as well.

As in the previous proof, let $Z \subseteq M^G$ be a $G$-invariant conull set on which the action is free. Let $V \subseteq Z$ be the set constructed in Lemma \ref{lem:newv}. Using the language of the previous paragraph, our goal is to find a $G$-invariant conull set $Z' \subseteq Z$ and build a bijection $T : V \cap Z' \rightarrow V \cap Z'$ having infinite orbits such that every power of $T$ and every power of $T^{-1}$ is cocycle-continuous. By the previous paragraph, it suffices to check that $T$ and $T^{-1}$ are cocycle-continuous.

Set $D_0 = V$ and $F_0 = \{1_G\}$. Fix an increasing sequence $(F_n)_{n \geq 1}$ of finite subsets of $G$ containing the identity and such that, for every $n$, $F_{n+1}$ contains two disjoint translates of $F_n^{-1} F_n$. For each $n \geq 1$, apply Lemma \ref{lem:marker} to obtain a $Z$-relatively clopen set $D_n \subseteq V$ satisfying $F_n^{-1} F_n d \cap D_n = \{d\}$ for all $d \in D_n$ and $V \subseteq F_n^{-1} F_n D_n$. Notice that $F_n d \cap F_n d' = \varnothing$ for $d \neq d' \in D_n$. Also notice that since $F_n^{-1} F_n \cdot v \cap D_n \neq \varnothing$ for all all $v \in V$ and since $F_{n+1}$ contains two disjoint translates of $F_n^{-1} F_n$, it follows that $|D_n \cap F_{n+1} d| \geq 2$ for every $d \in D_{n+1}$ and every $n$.

Fix an enumeration $1_G = g_0, g_1, \ldots$ of $G$. For each $n \in \N$ we define a cocycle-continuous map $\ell_n^{n+1} : D_n \rightarrow D_{n+1}$. Fix $d \in D_n$. First, if there is $f \in F_{n+1}$ with $f^{-1} \cdot d \in D_{n+1}$ (by construction there can be at most one such $f$) then set $\ell_n^{n+1}(d) = f^{-1} \cdot d$. Otherwise, set $\ell_n^{n+1}(d) = g_i \cdot d$ where $i$ is least with $g_i \in F_{n+1}^{-1} F_{n+1}$ and $g_i \cdot d \in D_{n+1}$. Since $D_n$ is contained in the $G$-invariant conull set $Z$, since each $D_{n+1}$ is relatively clopen in $Z$, and since $G$ acts continuously, its immediate that $\ell_n^{n+1}$ is cocycle-continuous. Also, notice that by the last statement of the previous paragraph, $\ell_n^{n+1}$ is everywhere at least two-to-one.

Through the function $\ell_n^{n+1}$ we view each point in $D_n$ as belonging to some point in $D_{n+1}$. Set $\ell_0^n = \ell_{n-1}^n \circ \ell_{n-2}^{n-1} \circ \cdots \circ \ell_0^1$. Define equivalence relations $E_n$ on $V = D_0$ by setting $(v, v') \in E_n$ if and only if $\ell_0^n(v) = \ell_0^n(v')$. Notice that every class of $E_n$ possesses precisely one point from $D_n$. The equivalence relations $E_n$ are Borel, have finite classes, and are increasing, so the union $E = \bigcup_n E_n$ is hyperfinite. Moreover, since $\ell_n^{n+1}$ is everywhere at least two-to-one, each class of $E_n$ has cardinality at least $2^n$, and each class of $E$ is infinite.

Let us intuitively describe how we will construct $T$. On each class of $E_1$ we will choose an ordering and let $T$ move each point to the next one in the ordering. This will leave each class with a ``first'' element where $T^{-1}$ is not defined, and a ``last'' element where $T$ is not defined. Inductively, each class of $E_n$ will have a first and a last point, and repeated application of $T$ will move the first point through all of its $E_n$-class until it reaches the last point. Each $E_n$ class has a unique point in $D_n$, and the function $\ell_n^{n+1}$ indicates how the $E_n$-classes will be combined into $E_{n+1}$-classes, indexed by elements of $D_{n+1}$. For each $d \in D_{n+1}$ we will order the $D_n$ points belonging to $d$, in effect ordering the $E_n$-classes in the $E_{n+1}$-class of $d$. $T$ will then take the last point of an $E_n$-class to the first point of the next $E_n$-class. This will only leave $T$ undefined at the last point of the last $E_n$-class (i.e. the last point of the $E_{n+1}$-class of $d$), and $T^{-1}$ undefined at the first point of the first $E_n$-class (i.e. the first point of the $E_{n+1}$-class of $d$). We then go up one scale and repeat this construction by induction.  

For $n \geq 1$ and $d \in D_n$ set $W_n(d) = \{g \in F_n^{-1} F_n : g \cdot d \in D_{n-1} \text{ and } \ell_{n-1}^n(g \cdot d) = d\}$. Note that $W_n(d)$ is a continuous function of $d$. Using our fixed enumeration of $G$ we obtain an ordering of the set $W_n(d)$. Set $a_n^{n-1}(d) = g \cdot d \in D_{n-1}$ where $g$ is the least element of $W_n(d)$, and set $b_n^{n-1}(d) = g \cdot d \in D_{n-1}$ where $g$ is the largest element of $W_n(d)$. If $g, h \in W_n(d)$ and $h$ is the next element in $W_n(d)$ after $g$, then set $s_n(g \cdot d) = h \cdot d$. So $s_n$ is a function from $D_{n-1} \setminus b_n^{n-1}(D_n)$ to $D_{n-1} \setminus a_n^{n-1}(D_n)$. Note that $a_n^{n-1}$, $b_n^{n-1}$, and $s_n$ are cocycle-continuous functions and that, by remarks in the first paragraph, $a_n^{n-1}(D_n)$ and $b_n^{n-1}(D_n)$ are relatively clopen in $Z$.

Set $a_n^0 = a_1^0 \circ a_2^1 \circ \cdots \circ a_n^{n-1}$ and $b_n^0 = b_1^0 \circ \cdots \circ b_n^{n-1}$ and define $A_n = a_n^0(D_n)$ and $B_n = b_n^0(D_n)$ for $n \geq 1$. Set $B_0 = A_0 = V$. Notice that the $A_n$'s are decreasing, the $B_n$'s are decreasing, and that all of these sets are relatively clopen in $Z$. Now for $v \in B_{n-1} \setminus B_n$ set
$$T(v) = a_{n-1}^0(s_n(\ell_0^{n-1}(v))) \in A_{n-1} \setminus A_n.$$
Equivalently, $T(v) = a_{n-1}^0 \circ s_n \circ (b_{n-1}^0)^{-1}(v)$. In words, if $v$ is the last point of its $E_{n-1}$-class but not the last point of its $E_n$-class, then we lift via $\ell_0^{n-1}$ to the representative $d' \in D_{n-1}$ of its $E_{n-1}$-class, we then apply $s_n$ to shift to the representative $d'' \in D_{n-1}$ of the next $E_{n-1}$-class in its $E_n$-class, and then we apply $a_{n-1}^0$ to travel down from $d''$ to the first point in its $E_{n-1}$-class. The formula for the inverse to $T$ is similarly described: when $v \in A_{n-1} \setminus A_n$ we have $T^{-1}(v) = b_{n-1}^0(s_n^{-1}(\ell_0^{n-1}(v))) = b_{n-1}^0 \circ s_n^{-1} \circ (a_{n-1}^0)^{-1}(v)$.

Since $T$ is a composition of cocycle-continuous functions it is cocycle-continuous on its domain, and we see directly from the formula for $T^{-1}$ that it is cocycle continuous on its domain as well. Lastly, we inspect the domain of $T$. The set $B_n$ meets every $E_n$-class precisely once, and since each $E_n$ class has cardinality at least $2^n$, it follows that $\mu(B_n) \leq 2^{-n}$. The sets $B_n$ are decreasing so their intersection has measure $0$. Similarly the intersection of the $A_n$'s is null. Setting $Z' = Z \setminus G \cdot ((\bigcap_n B_n) \cup (\bigcap_n A_n))$, we have that $Z'$ is $G$-invariant, conull in $M^G$, and that $T$ is a bijection on $V \cap Z'$.
\end{proof}

\section{A comparative discussion of the proof} \label{sec:discuss}

Given that a weaker, but quite similar, version of our main theorem was proven by Bowen a few years ago, we feel its proper to openly identify how our proof both builds upon and differs from Bowen's proof. We'll also take this opportunity to more generally acknowledge the historical roots of some of the methods involved. Specifically, beyond Ornstein's isomorphism theorem, which is a highly celebrated, deep, and very technical achievement, the proof of our main theorem involves two additional main ingredients.

One is to abstractly apply Ornstein's theorem (or a variation of it). This trick was first used by Stepin in 1975 in the case of groups $G$ that contain $\Z$ as a subgroup. One of Bowen's main contributions was to do this in a more abstract way by using a copy of $\Z$ in the full-group of the orbit-equivalence relation rather than a copy of $\Z$ in $G$. We too used this technique (though our copy of $\Z$ was in the pseudo-group, not the full-group; it was generated by the transformation $T : V \rightarrow V$). We remark that in order for this technique to work, it is necessary that $(L^G, \lambda^G)$ and $(K^G, \kappa^G)$ admit a non-trivial common factor and that they each have strong independence properties over this common factor.

The second main ingredient is the reduction to considering only special pairs of equal-entropy probability spaces (like we did with $R_\Gamma$). This clever reduction was first used in Keane and Smorodinsky's proof of the finitary isomorphism theorem for Bernoulli shifts over $\Z$ in 1979, and it was used by Bowen and again by us. We remark that for Bowen and ourselves, the choice of special pairs had to be carefully made in conjunction with the construction of common factors with strong independence properties, as required for the method described in the previous paragraph.

The biggest difference between our proof and Bowen's, and the source of all the smaller differences, is in this last ingredient. In contrast with our relation $R_\Gamma$, Bowen considered pairs $(L, \lambda)$ and $(K, \kappa)$ such that $\sH(L, \lambda) = \sH(K, \kappa)$ and such that there exists a third space $(M, \mu)$ that is non-trivial and admits measure-preserving maps $(L, \lambda) \rightarrow (M, \mu)$ and $(K, \kappa) \rightarrow (M, \mu)$. By applying these maps coordinate-wise, one then sees that $(M^G, \mu^G)$ is a common factor of $(L^G, \lambda^G)$ and $(K^G, \kappa^G)$, just as we similarly constructed a common factor, and just as is required as we mentioned above. However, it is evident that this requirement that the base spaces have a non-trivial factor immediately rules out the possibility of considering two-atom base spaces. Our main contribution here, therefore (ignoring the result on finitary isomorphisms), is in the creation of the relations $R_\Gamma$, the use of lesser-known group theoretic facts (Lemma \ref{lem:findsub}), and in particular the technical proof of Lemma \ref{lem:3less}.

Lastly, differences in the construction of the common factor $M^G$ led Bowen to use a transformation $T : M^G \rightarrow M^G$ on all of $M^G$, while we used a transformation on a subset $T : V \rightarrow V$. Additionally, due to differences in the independence properties present, Bowen had to invoke Thouvenot's relative isomorphism theorem and apply it to the transformations lifted from $T$, while our independence properties were cleaner and we were able to simply apply Ornstein's isomorphism theorem to the lifted transformations. As an unintended bonus, our cleaner independence properties furthermore allowed us to invoke the Keane--Smorodinsky finitary isomorphism result and obtain finitary isomorphisms in general.

\thebibliography{999}

\bibitem{AS}
A. Alpeev and B. Seward,
\textit{Krieger's finite generator theorem for ergodic actions of countable groups III}, preprint. https://arxiv.org/abs/1705.09707.

\bibitem{B10b}
L. Bowen,
\textit{Measure conjugacy invariants for actions of countable sofic groups}, Journal of the American Mathematical Society 23 (2010), 217--245.

\bibitem{B12}
L. Bowen,
\textit{Sofic entropy and amenable groups}, Ergod. Th. \& Dynam. Sys. 32 (2012), no. 2, 427--466.

\bibitem{B12b}
L. Bowen,
\textit{Every countably infinite group is almost Ornstein}, in Dynamical Systems and Group Actions, Contemp. Math., 567, Amer. Math. Soc., Providence, RI, 2012, 67--78.

\bibitem{GJ}
S. Gao and S. Jackson,
\textit{Countable abelian group actions and hyperfinite equivalence relations}, Inventiones Mathematicae 201 (2015), no. 1, 309--383.

\bibitem{JKL02}
S. Jackson, A.S. Kechris, and A. Louveau,
\textit{Countable Borel equivalence relations}, Journal of Mathematical Logic 2 (2002), no. 1, 1--80.

\bibitem{KS79}
M. Keane and M. Smorodinsky,
\textit{Bernoulli schemes of the same entropy are finitarily isomorphic}, Annals of Mathematics 109 (1979), no. 2, 397--406.

\bibitem{KL11a}
D. Kerr and H. Li,
\textit{Entropy and the variational principle for actions of sofic groups}, Invent. Math. 186 (2011), 501--558.

\bibitem{KL13}
D. Kerr and H. Li,
\textit{Soficity, amenability, and dynamical entropy}, American Journal of Mathematics 135 (2013), 721--761.

\bibitem{KL11b}
D. Kerr and H. Li,
\textit{Bernoulli actions and infinite entropy}, Groups Geom. Dyn. 5 (2011), 663--672.

\bibitem{Ki75}
J. C. Kieffer,
\textit{A generalized Shannon--McMillan theorem for the action of an amenable group on a probability space}, Ann. Prob. 3 (1975), 1031--1037.

\bibitem{Ko58}
A.N. Kolmogorov,
\textit{New metric invariant of transitive dynamical systems and endomorphisms of Lebesgue spaces}, (Russian) Dokl. Akad. Nauk SSSR 119 (1958), no. 5, 861--864.

\bibitem{Ko59}
A.N. Kolmogorov,
\textit{Entropy per unit time as a metric invariant for automorphisms}, (Russian) Dokl. Akad. Nauk SSSR 124 (1959), 754--755.

\bibitem{L}
D. V. Lytkina,
\textit{Structure of a group with elements of order at most 4}, Siberian Mathematical Journal (2007) 48, no. 2, 283--287.



\bibitem{M}
A. S. Mamontov,
\textit{Groups of exponent 12 without elements of order 12}, Siberian Mathematical Journal (2013) 54, 114--118.

\bibitem{Me59}
L. D. Me\v{s}alkin,
\textit{A case of isomorphism of Bernoulli schemes}, Dokl. Akad. Nauk SSSR 128 (1959), 41--44.

\bibitem{Or70a}
D. Ornstein,
\textit{Bernoulli shifts with the same entropy are isomorphic}, Advances in Math. 4 (1970), 337--348.

\bibitem{Or70b}
D. Ornstein,
\textit{Two Bernoulli shifts with infinite entropy are isomorphic}, Advances in Math. 5 (1970), 339--348.

\bibitem{OW87}
D. Ornstein and B. Weiss,
\textit{Entropy and isomorphism theorems for actions of amenable groups}, Journal d'Analyse Math\'{e}matique 48 (1987), 1--141.


\bibitem{SS}
S. Schneider and B. Seward,
\textit{Locally nilpotent groups and hyperfinite equivalence relations}, preprint. https://arxiv.org/abs/1308.5853.

\bibitem{S1}
B. Seward,
\textit{Krieger's finite generator theorem for ergodic actions of countable groups I}, preprint. http://arxiv.org/abs/1405.3604.

\bibitem{S2}
B. Seward,
\textit{Krieger's finite generator theorem for ergodic actions of countable groups II}, preprint. http://arxiv.org/abs/1501.03367.

\bibitem{Si59}
Ya. G. Sinai,
\textit{On the concept of entropy for a dynamical system}, (Russian) Dokl. Akad. Nauk SSSR 124 (1959), 768--771.

\bibitem{St75}
A. M. Stepin,
\textit{Bernoulli shifts on groups}, (Russian) Dokl. Akad. Nauk SSSR 223 (1975), no. 2, 300--302.

\end{document}